\newtheorem{theorem}{\rm\bf Theorem}[section]
\newtheorem{proposition}[theorem]{\rm\bf Proposition}
\newtheorem{lemma}[theorem]{\rm\bf Lemma}
\newtheorem{corollary}[theorem]{\rm\bf Corollary}
\newtheorem*{theorem 1}{\rm\bf Proposition 1}
\newtheorem*{theorem 2}{\rm\bf Proposition 2}
\newtheorem{question}{\rm\bf Question}
\theoremstyle{definition}
\theoremstyle{remark}
\newcommand{\ML}{\mathcal{ML}}
\newcommand{\MF}{\mathcal{MF}}
\newcommand{\PMF}{\mathcal{PMF}}
\newcommand{\n}{n \to \infty}
\newcommand{\R}{\mathbb{R}}
\def\interieur#1{\mathord{\mathop{\kern 0pt #1}\limits^\circ}}
\title[Convergence of earthquake and horocycle paths]{Convergence of earthquake and horocycle paths to the boundary of Teichm\"uller space}
\author{Manman Jiang}
\address{Manman Jiang, Department of Mathematics, Sun Yat-sen University, 510275, Guangzhou, P. R. China}
\email{jiangmanm@126.com}
\author{Weixu Su}
\address{Weixu Su, School of Mathematics, Fudan University, 200433, Shanghai, P. R. China and Shanghai Center
for Mathematical Sciences (SCMS),  200433, Shanghai, P. R. China}
\email{suwx@fudan.edu.cn}
\date{\today}
\thanks{M. Jiang is partially supported by NSFC No: 11271378. W. Su is partially supported by NSFC No: 11201078 }
\begin{document}

\begin{abstract}
We study the convergence of earthquake paths and horocycle paths in the Gardiner-Masur compactification of Teichm\"uller space. 
We show that an earthquake path directed by a uniquely ergodic or simple closed measured geodesic lamination
converges to the Gardiner-Masur boundary.
 Using the embedding of flat metrics into the space of geodesic currents, we prove that a horocycle path in Teichm\"uller space, induced by 
 a quadratic differential whose vertical measured foliation is unique ergodic, converges to the Gardiner-Masur boundary and to the Thurston boundary.
\end{abstract}

\maketitle

%\bigskip

\noindent AMS Mathematics Subject Classification:   32G15 ; 30F30 ; 30F60.
\medskip

\noindent Keywords: Earthquake; Gardiner-Masur boundary; horocycle flow; Teichm\"{u}ller space.
\medskip

\maketitle
\section{Introduction}
Let $M$ be an  oriented surface of genus $g$ with $n$ punctures.
In this paper, we always assume that $3g-3+n>0$.
Let $\mathcal{T}(M)$ be the Teichm\"uller space of marked conformal (hyperbolic) structures on $M$.
There are several compactifications of $\mathcal{T}(M)$, such as the Thurston compactification and the Bers compactification,
which are extensively used in the study of degeneration of hyperbolic structures and the action of mapping class group.
 In \cite{GM}, Gardiner and Masur defined a compactification  of $\mathcal{T}(M)$, namely the\emph{ Gardiner-Masur compactification}, and showed that the Thurston boundary is strictly contained in the Gardiner-Masur boundary.

The convergence of certain natural rays in Teichm\"uller space to the above mentioned boundaries
 has attracted a lot of attentions
(We say that a ray in Teichm\"uller space
\emph{converges} if  the limit set of the ray in certain compatification is
a unique point).
The following is a  collection of some (not all) known results:
 \begin{itemize}
 \item[$\bullet$]\emph{ Convergence of Teichm\"uller rays}. Every Teichm\"uller geodesic ray converges to a unique point on the Gardiner-Masur boundary \cite{LS}.
 Typical Teichm\"uller geodesic rays, which are one-cylinder Strebel or uniquely ergodic, converge to the Thurston boundary \cite{Mas1}.
Examples of Teichm\"uller rays that diverge in the Thurston boundary were first constructed by Lenzhen \cite{Len}.
  Moreover, there exists Teichm\"uller geodesic ray whose limit set on the Thurston boundary is a line segment, see \cite{CMW}.

  \item[$\bullet$] \emph{Convergence of stretch rays}. Stretch ray are geodesic rays of the Thurston asymmetric metric \cite{Thurston}.
  Papadopoulos \cite{Pap} proved that a stretch ray directed by a complete measured geodesic lamination converges to a unique point on the Thurston boundary. A stronger result of Walsh \cite{Walsh} implies that any geodesic ray of the Thurston asymmetric metric (thus any stretch ray)
      converges to  a unique point on the Thurston boundary.

  \item[$\bullet$] \emph{Convergence of earthquake rays}. Any earthquake path determined by a
  measured geodesic lamination $\mu$ converges to the projective class  of $\mu$ on the  Thurston boundary
  (This can be proved by using Kerckhoff's variational formula of hyperbolic length functions \cite{Ker1}).
 \end{itemize}

  In the first part of this paper, we initiate the study of the following question:
\begin{question}
Let $\mu$ be  a measured geodesic lamination and let $\{E^{t\mu}(X)\}_{t\geq 0}$
 be the earthquake path determined by $\mu$. Determine the limit of $\{E^{t\mu}(X)\}_{t\geq 0}$ on the Gardiner-Masur boundary as $t$
 tends to $\infty$.
\end{question}

We don't know whether the limit is unique for every measured geodesic lamination.
Previous  work of Miyachi \cite{Miy1, Miy2} indicates that the topology of Gardiner-Masur compactification
is more involved.
We will prove the following (see \S \ref{sec:earthquake}):
\begin{theorem}\label{sim & uniq}
If $\mu$ is either uniquely ergodic or a weighted simple closed geodesic, then $\{E^{t\mu}(X)\}_{t\geq 0}$ converges to the
projective class of $\mu$ as a limit point on the Gardiner-Masur boundary.
\end{theorem}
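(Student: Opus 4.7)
The plan is to use the Gardiner-Masur embedding $\Phi \colon \mathcal{T}(M) \to \mathbb{P}\mathbb{R}_{+}^{\mathcal{S}}$, $\Phi(X) = [\alpha \mapsto \sqrt{\operatorname{Ext}_{X}(\alpha)}]$ (with $\mathcal{S}$ the set of isotopy classes of simple closed curves), and the standard inclusion of the Thurston boundary into the Gardiner-Masur boundary, $[\mu] \mapsto [\alpha \mapsto i(\mu,\alpha)]$. Writing $X_{t} := E^{t\mu}(X)$, the theorem reduces to showing that for every pair $\alpha, \beta \in \mathcal{S}$ with $i(\mu,\beta) > 0$,
\[
\frac{\sqrt{\operatorname{Ext}_{X_{t}}(\alpha)}}{\sqrt{\operatorname{Ext}_{X_{t}}(\beta)}} \xrightarrow[t \to \infty]{} \frac{i(\mu,\alpha)}{i(\mu,\beta)}.
\]

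For $\mu = c\gamma$ a weighted simple closed geodesic, the idea is to exploit the periodicity of the earthquake flow. Since the flow fixes $\ell(\gamma)$ and translates the Fenchel-Nielsen twist parameter linearly, I write $X_{t} = T_{\gamma}^{n(t)} \cdot Y(t)$ with $T_{\gamma}$ the Dehn twist, $n(t) \sim tc/\ell_{X}(\gamma)$ and $Y(t)$ confined to a compact subset of $\mathcal{T}(M)$. Mapping class group equivariance of extremal length gives $\operatorname{Ext}_{X_{t}}(\alpha) = \operatorname{Ext}_{Y(t)}(T_{\gamma}^{-n(t)}\alpha)$. Since $T_{\gamma}^{-n}\alpha/n \to i(\gamma,\alpha)\gamma$ in $\MF$ and extremal length is jointly continuous on $\mathcal{T}(M) \times \MF$, along any subsequence with $Y(t) \to Y_{\infty}$ one obtains
\[
\operatorname{Ext}_{X_{t}}(\alpha)/n(t)^{2} \longrightarrow i(\gamma,\alpha)^{2}\, \operatorname{Ext}_{Y_{\infty}}(\gamma).
\]
The subsequence-dependent factor $\operatorname{Ext}_{Y_{\infty}}(\gamma)$ cancels when taking the ratio with the analogous expression for $\beta$, producing the desired limit $i(\gamma,\alpha)^{2}/i(\gamma,\beta)^{2}$ (and $0$ when $i(\gamma,\alpha) = 0$) independently of the chosen subsequence.

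For the uniquely ergodic case, the main tool is Kerckhoff's extremal-length formula
\[
\operatorname{Ext}_{Y}(\alpha) = \sup_{\eta \in \MF} \frac{i(\alpha,\eta)^{2}}{\operatorname{Ext}_{Y}(\eta)},
\]
combined with the classical Thurston-boundary asymptotics $\ell_{X_{t}}(\cdot)/t \to i(\mu,\cdot)$ deduced from Kerckhoff's variational formula for hyperbolic length. Taking $\eta = \mu$ gives $\sqrt{\operatorname{Ext}_{X_{t}}(\alpha)\operatorname{Ext}_{X_{t}}(\mu)} \geq i(\alpha,\mu)$, so the desired ratio is bounded below by $i(\mu,\alpha)/i(\mu,\beta)$ up to a factor that is the same for $\alpha$ and $\beta$. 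For the matching upper bound, let $[\eta_{t}] \in \PMF$ be the projective class of the horizontal foliation of an (almost-)extremal quadratic differential for $\alpha$ on $X_{t}$, so that $\sqrt{\operatorname{Ext}_{X_{t}}(\alpha)} = i(\alpha,\eta_{t})/\sqrt{\operatorname{Ext}_{X_{t}}(\eta_{t})}$. I expect that any projective accumulation point $[\eta_{\infty}]$ has support contained in the support of $\mu$, whereupon unique ergodicity forces $[\eta_{\infty}] = [\mu]$, so that the ratio $i(\alpha,\eta_{t})/i(\beta,\eta_{t}) \to i(\mu,\alpha)/i(\mu,\beta)$.

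The main obstacle is precisely the final step: rigorously establishing that the horizontal foliations $[\eta_{t}]$ projectively converge to $[\mu]$. The Thurston asymptotics control the hyperbolic geometry of $X_{t}$ but not directly the extremal quadratic differentials realizing Kerckhoff's supremum. Translating the former into the latter requires a rigidity result in the spirit of Miyachi's work on the Gardiner-Masur boundary at uniquely ergodic points: Thurston convergence to a uniquely ergodic foliation should automatically upgrade to Gardiner-Masur convergence. Once such a statement is established in the earthquake setting, the argument sketched above completes the proof.
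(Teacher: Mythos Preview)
Your treatment of the weighted simple closed geodesic case is correct and, in a sense, cleaner than the paper's. The paper (in its Lemma~\ref{lem:twist}) only establishes the cruder two-sided bound
\[
\frac{1}{C}\,i(\mu,\beta)^2 \leq \varliminf_{t\to\infty}\frac{\mathrm{Ext}_\beta(X_t)}{t^2} \leq \varlimsup_{t\to\infty}\frac{\mathrm{Ext}_\beta(X_t)}{t^2} \leq C\,i(\mu,\beta)^2,
\]
using the Kerckhoff distance formula to absorb the fractional twist into a fixed multiplicative error $e^{2d}$; it then needs Miyachi's Theorem~\ref{Miy} to upgrade this to the actual identification of the boundary point. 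Your use of joint continuity of extremal length on $\mathcal{T}(M)\times\MF$ together with the subsequence argument on $Y(t)$ gives the ratio limit $i(\gamma,\alpha)^2/i(\gamma,\beta)^2$ directly, with no appeal to Miyachi. That is a genuine, if modest, improvement.

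For the uniquely ergodic case, however, your proposal has a real gap---and you correctly diagnose it. The convergence $[\eta_t]\to[\mu]$ of the optimal foliations in Kerckhoff's formula is precisely the hard part, and your sketch does not supply it. The paper does \emph{not} attempt this direct computation at all. Instead it argues in two quick steps: (i) the earthquake path converges to $[\mu]$ in the Thurston compactification (Bonahon, via Kerckhoff's cosine formula); (ii) a theorem of Miyachi (\cite{Miy2}, Corollary~1 of \S6) says that if a sequence converges in the Thurston boundary to a uniquely ergodic $[\mu]$, then for \emph{any} Gardiner--Masur accumulation point $p$ the zero set of $\varepsilon_p$ is exactly $\{t\mu:t\geq 0\}$; combined with Theorem~\ref{Miy} this forces $\varepsilon_p(\cdot)=c\cdot i(\cdot,\mu)$, i.e.\ $p=[\mu]$. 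This is exactly the ``rigidity result in the spirit of Miyachi'' you say you need---but the point is that it already exists in the literature and can be quoted as a black box, so there is no need to redo the analysis of extremal quadratic differentials along the path. Replacing your last two paragraphs with that citation completes the proof.
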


In \cite{Mir}, Mirzakhani showed that the earthquake flow and the  horocylic flow are measurably isomorphic.
Thus it is interesting to consider the  convergence of a horocycle path.
In the second part of this paper, we obtain the following  (see \S \ref{sec:horocycle}):
 \begin{theorem}\label{thm:horo}
Let $q$ be a holomorphic quadratic differential on $X\in \mathcal{T}(M)$ and $\{\pi(h^t(q))\}_{t\in \R}$ be the corresponding horocycle path in $\mathcal{T}(M)$. If the vertical measured foliation $V_q$ of $q$ is uniquely ergodic, then  $\{\pi(h^t(q))\}_{t\in \R}$ converges to
the projective class of $V_q$ on the Gardiner-Masur boundary.
 \end{theorem}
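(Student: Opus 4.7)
The Gardiner--Masur compactification embeds $\mathcal{T}(M) \hookrightarrow \mathbb{P}(\R_{\geq 0}^{\mathcal{S}})$ via $X \mapsto [\alpha \mapsto \mathrm{Ext}_X^{1/2}(\alpha)]$, and under Miyachi's extension a projective class $[F]\in\PMF$ sits at the boundary point $[\alpha \mapsto i(F,\alpha)]$. After normalizing $\|q\|=1$, my plan is to show that for every simple closed curve $\alpha$
\[
\lim_{t\to\infty} \frac{\mathrm{Ext}_{X_t}^{1/2}(\alpha)}{t} \;=\; i(V_q,\alpha),
\]
which yields $X_t \to [V_q]$ in the Gardiner--Masur boundary. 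The core of the argument will be the two-sided extremal-length sandwich
\[
i(H_{h^t(q)},\alpha)^2 \;\leq\; \mathrm{Ext}_{X_t}(\alpha) \;\leq\; \bigl(i(V_q,\alpha) + i(H_{h^t(q)},\alpha)\bigr)^2.
\]
The lower bound is Minsky's product inequality $\mathrm{Ext}_{X_t}(\alpha)\cdot\mathrm{Ext}_{X_t}(F) \geq i(F,\alpha)^2$ with $F=H_{h^t(q)}$, using $\mathrm{Ext}_{X_t}(H_{h^t(q)})=\|q\|=1$ from Kerckhoff--Hubbard--Masur. The upper bound uses $\mathrm{Ext}_{X_t}(\alpha)\leq \ell_{q_t}(\alpha)^2$ together with the Pythagoras-type estimate $\ell_{q_t}(\alpha)\leq i(V_{q_t},\alpha)+i(H_{q_t},\alpha)$.

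Given the sandwich, it suffices to show $H_{h^t(q)}/t \to V_q$ in $\MF$. The horocycle flow preserves the vertical foliation, so $V_{h^t(q)}=V_q$; in flat coordinates for $q$ the foliation $H_{h^t(q)}$ has leaves $\{tx+y=\mathrm{const}\}$ and transverse measure $|t\,dx+dy|$. A direct wedge-product computation on $M$ yields the two key intersection identities
\[
i(V_q,H_{h^t(q)}) \;=\; \int_M |dx\wedge(t\,dx+dy)| \;=\; \|q\|\;=\;1, \qquad i(H_q,H_{h^t(q)}) \;=\; t.
\]
The upper bound in the sandwich furnishes pre-compactness of $\{H_{h^t(q)}/t\}_{t\geq 1}$ in $\MF$, so any $t_n\to\infty$ admits a subsequence along which $H_{h^{t_n}(q)}/t_n$ converges to some $F\in\MF$. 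Passing to the limit in the first identity gives $i(V_q,F)=\lim 1/t_n = 0$; unique ergodicity of $V_q$ then forces $F=c\,V_q$, and the second identity yields $c\cdot i(H_q,V_q)=1$. Since $i(H_q,V_q)=\|q\|=1$, we get $c=1$ and thus $F=V_q$.

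The main obstacle will be this identification step, where the uniquely-ergodic hypothesis is essential: without it, the set of measured foliations $F$ with $i(V_q,F)=0$ is a higher-dimensional simplex spanned by the ergodic components of $V_q$, and different subsequential limits could land at different points of this simplex. An alternative route, aligned with the paper's emphasis on the embedding of flat metrics into the space of geodesic currents, would be to promote each flat metric $q_t$ to a geodesic current $L_{q_t}$ (Bonahon), run the analogous sandwich-and-limit argument at the level of currents (where $i(L_{q_t},\alpha)=\ell_{q_t}(\alpha)$), identify the projective limit of $L_{q_t}/t$ as $V_q$ via unique ergodicity, and then transfer to Gardiner--Masur convergence via the extremal-length/flat-length comparison.
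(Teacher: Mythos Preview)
Your argument is correct but follows a genuinely different route from the paper. The paper first embeds the flat metrics $h^t(q)$ into the space of geodesic currents via the Duchin--Leininger--Rafi length current, shows $L_{h^t(q)}/|t|\to V_q$ there, and then invokes a compatibility lemma between current-limits and Thurston-boundary limits to conclude that every Thurston accumulation point $[\mu]$ of $\pi(h^t(q))$ satisfies $i(\mu,V_q)=0$; unique ergodicity forces Thurston convergence to $[V_q]$, and the passage to the Gardiner--Masur boundary is then carried out exactly as in the uniquely ergodic case of Theorem~\ref{sim & uniq}, via Miyachi's Theorem~\ref{Miy}. You bypass geodesic currents and the Thurston boundary entirely: the extremal-length sandwich (Minsky below, the unit-area flat metric above), together with the convergence of the rescaled moving foliation to $V_q$ in $\MF$, yields the pointwise limit $\mathrm{Ext}_{X_t}^{1/2}(\alpha)/t\to i(V_q,\alpha)$ directly. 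Your route is more elementary and gives an explicit normalizing sequence; the paper's route produces Thurston convergence as a byproduct and situates the result in the DLR framework announced in the introduction. Two small caveats: first, in this paper's convention the horocycle flow fixes the \emph{horizontal} foliation and shears the vertical one ($V_{h^t(q)}$ has transverse measure $|dx+t\,dy|$), opposite to what you write, so you should swap the roles of $V$ and $H$ throughout; second, pre-compactness of the rescaled foliations is not really a consequence of ``the upper bound in the sandwich'' but of the direct estimate obtained by integrating $|t\,dx+dy|$ along the $q$-geodesic representative of $\alpha$.
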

In fact, we will show that when $V_q$  is uniquely ergodic, the horocycle path $\{\pi(h^t(q))\}_{t\in \R}$ also converges to
the projective class of $V_q$ on the Thurston boundary. Since almost all measured geodesic laminations are uniquely ergodic,
we obtain:
\begin{corollary}
Almost every earthquake path or horocycle path in Teichm\"uller space converges to
a unique limit on the Gardiner-Masur (Thurston) boundary.
\end{corollary}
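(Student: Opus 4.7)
The plan is to reduce the corollary immediately to the two preceding theorems by invoking the classical measure-theoretic fact that unique ergodicity is generic. Concretely, equip the space $\ML(M)$ of measured geodesic laminations with Thurston measure (equivalently, transport the piecewise-linear Lebesgue measure from any train-track chart), and equip the bundle $Q(M)$ of holomorphic quadratic differentials over $\mathcal{T}(M)$ with its natural Lebesgue/Masur--Veech measure class. Both measure classes are mapping-class-group invariant, so the statement ``almost every'' is unambiguous.

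First I would recall the Masur--Veech theorem: the set of $\mu \in \ML(M)$ whose underlying measured foliation is uniquely ergodic has full Thurston measure, and, equivalently, for almost every $q \in Q(M)$ the vertical measured foliation $V_q$ is uniquely ergodic. This is Masur's criterion together with the Masur--Veech ergodicity/recurrence results for the Teichm\"uller flow on strata. I would state this as the one external input and cite the standard references.

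Next I would apply the two theorems separately. For the earthquake case, fix any basepoint $X \in \mathcal{T}(M)$ and note that the earthquake path $\{E^{t\mu}(X)\}_{t \ge 0}$ depends on the parameter $\mu \in \ML(M)$. For almost every $\mu$ the lamination is uniquely ergodic, so Theorem \ref{sim & uniq} applies and the path converges to the projective class $[\mu]$ on the Gardiner--Masur boundary (and, by Kerckhoff's variational formula as noted in the introduction, on the Thurston boundary as well). For the horocycle case, the almost sure hypothesis on $V_q$ supplied by Masur--Veech is precisely the hypothesis of Theorem \ref{thm:horo}, so the horocycle path $\{\pi(h^t(q))\}_{t \in \R}$ converges to $[V_q]$ on both the Gardiner--Masur and Thurston boundaries for almost every $q$.

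There is essentially no obstacle: the only point that requires care is the choice of measure and the verification that ``uniquely ergodic measured lamination'' and ``uniquely ergodic vertical foliation of a quadratic differential'' are both generic with respect to those measures, which is exactly the content of the Masur--Veech theorem. Once this is noted, the corollary follows by combining Theorem \ref{sim & uniq} and Theorem \ref{thm:horo} with this genericity statement.
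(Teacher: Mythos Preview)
Your proposal is correct and follows exactly the paper's own approach: the paper simply remarks ``Since almost all measured geodesic laminations are uniquely ergodic'' and deduces the corollary from Theorems~\ref{sim & uniq} and~\ref{thm:horo}. Your write-up is in fact more careful than the paper in making explicit the relevant measures (Thurston measure on $\ML$, Masur--Veech measure class on $\mathcal{QT}(M)$) and in treating the earthquake and horocycle cases separately.
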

The convergence of a general horocycle path to the Thurston boundary remains an
open question.

\subsection{Acknowledgements}
        The authors would like to thank the referee for the valuable suggestions that helped to improve the paper.
        We are grateful to Athanase Papadopoulos for  careful reading of the manuscript.

\subsection{Note added in proof} Just after this paper was submitted, Alberge posted a paper \cite{Alberge}
which contains another proof of Theorem \ref{thm:horo}. His proof uses Miyachi's extension of Gromov product
to the Gardiner-Masur compactification,
while our proof uses the embedding of flat metrics into the space of geodesic currents, developed by Duchin, Leininger and Rafi.

\section{Preliminaries}\label{Preliminaries}
\subsection{ Teichm\"{u}ller space and extremal length}
Let $M$ be an  oriented surface of genus $g$ with $n$ punctures with $3g-3+n>0$.
The Teichm\"{u}ller space $\mathcal{T}(M)$ is the space of equivalence classes of marked Riemann surfaces
(we shall consider only surfaces of analytically finite type).
Recall that a \emph{marked Riemann surface} is a pair $(X,f)$ where $X$ is a Riemann surface
homeomorphic to $M$,
and $f:M\rightarrow X$ is an orientation-preserving homeomorphism.
Two marked Riemann surfaces $(X_1,f_1),(X_2,f_2)$ are \emph{equivalent} if  there exists a conformal map $g:X_1\rightarrow X_2$ which is homotopic to $f_2\circ f_1^{-1}$.

The \emph{Teichm\"{u}ller distance} between two equivalence classes of  $(X_1,f_1)$ and $(X_2,f_2)$ is defined as following:
 $$d_T\left((X_1,f_1),(X_2,f_2)\right)=\frac 1 2 \inf_f \log K[f],$$
 where the infimum runs over all quasiconformal mappings $f$ in the homotopy class of $f_2\circ f_1^{-1}$ and $K[f]$ is the maximal dilatation of $f$.
  In the following, we will denote a point in $\mathcal{T}(M)$ by $X$ for simplicity,
  without explicit reference to the marking or to the equivalence relation.

 A \emph{conformal metric} $\sigma$ on a Riemann surface $X$ is a metric of the form $\sigma(z)|dz|$ in local conformal coordinates,
  where $\sigma(z)$ is a Borel measurable, non-negative function. The $\sigma$-area of $X$ is defined by
                     $$\mathrm{Area}_\sigma(X)=\int_X \sigma^2(z)|dz|^2,$$
 and the $\sigma$-length of a simple closed curve  $\alpha$  is defined by
           $$L_\sigma(\alpha)=\inf_{\alpha'}\int_{\alpha'}\sigma(z)|dz|,$$
 where the infimum is taken over all simple closed curves $\alpha'$  homotopic to $\alpha$.

 Note that a (holomorphic) quadratic differential $q=q(z)dz^2$ on $X$ induces a natural conformal metric $|q|^{1/2}$.
Usually, the $|q|^{1/2}$-area of $X$ is denoted by $\|q\|$, and the $|q|^{1/2}$-length of a simple closed curve  $\alpha$
 is denoted by $L_q(\alpha)$.

  With the above notation, the \emph{extremal length} of $\alpha$ on $X$ is defined by
  $$\mathrm{Ext}_{\alpha}(X)=\sup_{\rho} \frac{L^2_{\rho}(\alpha)}{\mathrm{Area}_\rho(X)},$$
where the supremum is taken over all conformal metrics on $X$ with finite positive area.
It is not hard to see that the definition of $\mathrm{Ext}_{\alpha}(X)$ only depends on the homotopy class of
$\alpha$ and the equivalence class of $X$ in Teichm\"uller space.

Denoted by  $\mathcal{S}$ the set of homotopy classes of essential simple closed curves (neither homotopic to a point nor to a puncture) on $X$.
In \cite{Ker2}, Kerckhoff gave an interpretation of the Teichm\"{u}ller distance as
$$d_T(X_1,X_2)=\frac{1}{2}\log\sup_{\alpha\in \mathcal{S}} \frac{\mathrm{Ext}_\alpha(X_1)}{\mathrm{Ext}_\alpha(X_2)}.$$

By the Uniformization Theorem,
a Riemann surface  inherits a unique complete hyperbolic metric of finite area in its conformal class.
As a result, the Teichm\"uller space $\mathcal{T}(M)$ can be considered as the space of equivalence classes of hyperbolic metrics on $M$,
 where two hyperbolic metrics are equivalent if there is an isometry between them which is homotopic to the identity map on $M$.

Given $X\in \mathcal{T}(M)$ and $\alpha\in \mathcal{S}$, we denote by $\ell_{\alpha}(X)$
 the hyperbolic length  of $\alpha$ on $X$, defined as the length of the simple closed geodesic in the homotopy class of $\alpha$.

\subsection{ Measured foliations and Thurston compactification}
We briefly recall the Thurston theory of measured foliations. For details we refer to \cite{FLP}.

A \emph{measured foliation} $(F,\mu)$ on $M$ is a foliation
(with singularities of the type $n$-prongs, $n\geq 3$) $F$ endowed with a transversely invariant measure $\mu$.
For simplicity we just denote the pair $(F,\mu)$ by $F$.
The intersection number $i(F,\alpha)$ of $F$ with a simple closed curve $\alpha$ is defined by
\[ i(F,\alpha)\triangleq \inf_{\alpha'}\int_{\alpha'}d\mu,\]
where $\alpha' $ ranges over all simple closed curves homotopic to $\alpha$.
Two measured foliations $F_1,\ F_2$ are called \textit{equivalent} if $i(F_1,\alpha)=i(F_2, \alpha)$ for all  $\alpha\in \mathcal{S}$.
We denote by $\mathcal{MF}$ the space of equivalent classes of measured foliations on $M$.
It was shown by Thurston that $\mathcal{MF}$ is homeomorphic to the Euclidean space of dimension
$6g-g+2n$.

Let $\R^{\mathcal{S}}_{+}$ be the space of non-negative functionals on $\mathcal{S}$.
We endow $\mathcal{S}$ with the discrete topology and $\R^{\mathcal{S}}_{+}$ with
the corresponding product topology. The projective space of  $\R^{\mathcal{S}}_{+}$
will be denoted by $\mathcal{P}\R^{\mathcal{S}}_+$.
There is a natural embedding of $\mathcal{MF}$  into $\R^{\mathcal{S}}_+$ defined by

 \begin{eqnarray*}
\mathcal{MF} &\to& \R^{\mathcal{S}}_+ \\
F &\mapsto& \left(i(F,\alpha)\right)_{\alpha\in \mathcal{S}}.
 \end{eqnarray*}
 This induces an embedding from the space of projective classes of measured foliations, denoted by $\mathcal{PMF}$, into $\mathcal{P}\R^{\mathcal{S}}_+$.
 The image of $\mathcal{PMF}$ in  $\mathcal{P}\R^{\mathcal{S}}_+$ is the so called \emph{Thurston boundary}.

There is another natural embedding, from $\mathcal{T}(M)$ to $\R^{\mathcal{S}}_+$, given by
 \begin{eqnarray*}
\mathcal{T}(M)&\to& \R^{\mathcal{S}}_+ \\
X&\mapsto& \left(\ell_\alpha(X)\right)_{\alpha\in \mathcal{S}}.
 \end{eqnarray*}
 It was observed by Thurston that the projection of the above map to $\mathcal{P}\R^{\mathcal{S}}_+ $ is also an embedding. Moreover, the closure of the image of $\mathcal{T}(M)$ in $\mathcal{P}\R^{\mathcal{S}}_+$ is a compact set, called the \textit{Thurston compactification} of $\mathcal{T}(M)$. The boundary of the Thurston compactification is  the Thurston boundary, which,
  as we mentioned above, can be identified with the space of projective measured foliations $\PMF$.

\subsection{Geodesic laminations and earthquake}
We endow $M$ with a hyperbolic metric.
A \emph{geodesic lamination} $\lambda$ is a closed subset of $M$ foliated by complete simple geodesics called \emph{leaves} of the lamination.
A \emph{measured geodesic lamination} is a geodesic lamination $\lambda$
together with a transverse measure $\mu$.
To lighten notation, we shall sometimes talk about a ``measured lamination"
instead of a ``measured geodesic lamination".
We shall denote such a measured lamination by $(\lambda,\mu)$ or, sometimes, $\mu$ for simplicity.
All the measured laminations are assumed to have compact support.
An example of a measured lamination is a weighted simple closed
geodesic, that is,  a simple closed geodesic $\gamma$ equipped with a positive weight $a > 0$.
The measure deposited on a transverse arc $k$ is then the sum of the Dirac masses at the
intersection points between $k$ and $\gamma$ multiplied by the weight $a$.
We shall denote such a measured lamination by $a\gamma$.

 Denote by $\mathcal{ML}$ the space of measured laminations on $M$.
The intersection number $i(\mu,\alpha)$ of $\mu \in \ML$ with $\alpha \in \mathcal{S}$ is defined similar to that of a measured foliation.
Since there is an one-to-one correspondence between $\ML$ and $\MF$ (\cite{Le}), we will use the terms measured foliation and measured lamination alternatively without further explanation.

Next, we briefly explain the notion of \textit{earthquakes} introduced by Thurston (\cite{Ker1,Th}).
 Fix a hyperbolic surface $X$ and a simple closed geodesic $\gamma$ on $X$.
 Cut along $\gamma$ and glue the boundary components back with a left twist of distance $t$, i.e., the two images of any point on $\gamma$ are separated by distance $t$ along the image of $\gamma$.
 Denote the new hyperbolic surface as $E^{t\gamma}(X)$. Note that the notions of ``left" and ``right" twists depend only on the orientation of $X$.
 We define the time-$t$ twist deformation of $X$  along a weighed simple closed geodesic $a\gamma$  to be the new structure obtained from $X$ by twisting left distance $ta$ along $\gamma$.

  For a general measured lamination $\mu$, the \emph{left earthquake deformation} is defined as a limit.
  More precisely, as shown by Thurston, the set of weighted simple closed geodesics $\R_+ \times \mathcal{S}$ is dense in $\mathcal{ML}$.
 For any sequence of weighted simple closed geodesics $\{a_i\gamma_i\}_{i=1}^{\infty}\subset \R_+ \times \mathcal{S}$ that converges to $\mu$,
  $\{E^{ta_i\gamma_i}(X)\}^{\infty}_{i=1}$ converges to a point in $\mathcal{T}(M)$ for any $t\geq 0$ (for a proof see  Kerckhoff (\cite{Ker1})).
  The limit, denoted as $E^{t\mu}(X)$, is independent of the choice of  $\{a_i\gamma_i\}_{i=1}^{\infty}$.
  We call $E^{t\mu}(X)$ the time-$t$ earthquake deformation of $X$ along $\mu$
  and $\{E^{t\mu}(X)\}_{t\geq0}$ the \emph{earthquake path} determined by $\mu$.

\subsection{Gardiner-Masur boundary}
Define a mapping $\Psi$ from $\mathcal{T}(M)$ to  $\mathcal{P}\R_{+}^\mathcal{S} $ by
\begin{eqnarray*}
\Psi:\mathcal{T}(M) &\rightarrow& \mathcal{P}\R_{+}^\mathcal{S} \\
X &\mapsto& [\left(\sqrt{\mathrm{Ext}_{\alpha}(X)}\right)_{\alpha\in \mathcal{S}}]
\end{eqnarray*}
where $[(\sqrt{\mathrm{Ext}_{\alpha}(X)})_{\alpha\in \mathcal{S}}]$ denotes the projective class of $(\sqrt{\mathrm{Ext}_{\alpha}(X)})_{\alpha\in S}$.

Gardiner and Masur \cite{GM} showed that the mapping $\Psi$ is an embedding and the image is relatively compact.
Denote by $cl_{GM}\mathcal{T}(M)$ the closure  of $\Psi(\mathcal{T}(M))$ in $\mathcal{P}\R_{+}^\mathcal{S}$. The boundary $\partial_{GM}\mathcal{T}(M)=cl_{GM}\mathcal{T}(M)-\Psi(\mathcal{T}(M))$ is called the \textit{Gardiner-Masur boundary} of $\mathcal{T}(M)$.

 Miyachi \cite{Miy1} showed that for any $p\in \partial_{GM}\mathcal{T}(M) $,
there is a non-negative continuous function $\varepsilon_p: \mathcal{MF}\rightarrow \R$
such that the projective class of $(\varepsilon_p(\alpha))_{\alpha\in \mathcal{S}}$ determines $p$ as a point of $\partial_{GM}\mathcal{T}(M)$
in the following sense:  any sequence $\{X_n\}_{n=1}^{\infty}$ in $\mathcal{T}(M)$ converging to $p \in \partial_{GM}\mathcal{T}(M)$ satisfies
$$\lim_{n\to \infty}\frac{\sqrt{\mathrm{Ext}_{\mu}(X_n)}}{\sqrt{\mathrm{Ext}_{\nu}(X_n)}}=\frac{\varepsilon_p(\mu)}{\varepsilon_p(\nu)}$$
for all $\mu, \nu\in \mathcal{MF}$ with $\varepsilon_p(\nu)\neq 0$.

The following theorem of  Miyachi will be used in our paper.

 \begin{theorem} [\cite{Miy2}]\label{Miy}
 Let $p\in \partial_{GM}\mathcal{T}(M)) $. Suppose that $G\in \mathcal{MF}$ is either uniquely ergodic or a weighted simple closed curve.
If $\varepsilon_p(F)=0$ for any  $F\in\mathcal{MF} $ with $i(F,G)=0$, then there is a constant  $c>0$ such that $\varepsilon_p(H)=c \cdot i(H,G)$ for any $H\in \mathcal{MF}$ .
\end{theorem}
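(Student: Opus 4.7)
My strategy is to sandwich $\varepsilon_p$ between matching bounds proportional to $i(\cdot, G)$, exploiting the Kerckhoff-Gardiner-Masur duality between extremal length and intersection number.

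Fix a sequence $\{X_n\}\subset\mathcal{T}(M)$ with $X_n\to p$ and positive scalars $t_n$ chosen so that $f_n(H):=t_n\sqrt{\mathrm{Ext}_H(X_n)}\to \varepsilon_p(H)$ pointwise on $\mathcal{MF}$ (continuity of the limit being guaranteed by Miyachi's framework). Since $i(G,G)=0$, the hypothesis forces $\varepsilon_p(G)=0$ and hence $f_n(G)\to 0$. After passing to a subsequence, set
\[
c:=\lim_n \frac{t_n^2}{f_n(G)} \in [0,\infty];
\]
the task is to prove $c\in(0,\infty)$ and $\varepsilon_p=c\cdot i(\cdot,G)$ on all of $\mathcal{MF}$.

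The lower bound is immediate from the Gardiner-Masur inequality $i(H,G)^2\le \mathrm{Ext}_H(X_n)\,\mathrm{Ext}_G(X_n)$: multiplying by $t_n^4$ and taking square roots gives $f_n(H)\ge i(H,G)\cdot t_n^2/f_n(G)$, so passing to the limit yields $\varepsilon_p(H)\ge c\cdot i(H,G)$. For the matching upper bound I would invoke Kerckhoff's dual formula
\[
\sqrt{\mathrm{Ext}_H(X_n)}=\sup_{\mu\in \mathcal{MF}\setminus\{0\}}\frac{i(H,\mu)}{\sqrt{\mathrm{Ext}_\mu(X_n)}}
\]
and argue that along any near-maximizing sequence $\{\mu_n\}$ (suitably normalized) the rescaled values $f_n(\mu_n)$ must tend to $0$; compactness of $\mathcal{PMF}$ then produces a projective limit $\mu_\infty$ with $\varepsilon_p(\mu_\infty)=0$, so by hypothesis $i(\mu_\infty,G)=0$. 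Substituting this limit back into Kerckhoff's expression should yield $\varepsilon_p(H)\le c\cdot i(H,G)$, and positivity of $c$ then follows from the non-triviality of $\varepsilon_p$.

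The dichotomy on $G$ enters through the structure of the null set $G^\perp:=\{F\in\mathcal{MF}:i(F,G)=0\}$. When $G$ is uniquely ergodic, $G^\perp=\mathbb{R}_{\ge 0}\cdot G$ collapses to a single ray, so $\mu_\infty$ is forced to be a multiple of $G$ and the upper-bound step becomes a direct substitution that matches the lower-bound constant. When $G=a\gamma$ is a weighted simple closed curve, $G^\perp$ is the much larger subspace of foliations disjoint from $\gamma$; here I expect to invoke Minsky's product-region estimates for extremal length near the thin part $\ell_\gamma\to 0$ to effectively reduce Kerckhoff's maximization to the $\gamma$-direction. The main technical obstacle is this upper-bound step: verifying that near-extremal sequences in Kerckhoff's formula indeed concentrate projectively on $G^\perp$ rather than on foliations with $i(\cdot,G)>0$, and that the resulting constant coincides with the lower constant $c$ extracted in the setup.
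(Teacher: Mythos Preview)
The paper does not prove this statement: Theorem~\ref{Miy} is quoted from Miyachi \cite{Miy2} and used as a black box in the proofs of Corollary~\ref{coro:simple} and Theorem~\ref{sim & uniq}. There is therefore no ``paper's own proof'' to compare your proposal against.

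Evaluating the proposal on its own merits: your lower bound via the Minsky--Gardiner--Masur inequality $i(H,G)^2\le \mathrm{Ext}_H(X_n)\,\mathrm{Ext}_G(X_n)$ is sound and does give $\varepsilon_p(H)\ge c\cdot i(H,G)$ with $c=\lim t_n^2/f_n(G)$; finiteness of $c$ follows immediately since $\varepsilon_p$ is finite. The problem is the upper bound, and you correctly flag it as the obstacle. Your claim that near-maximizers $\mu_n$ in Kerckhoff's formula satisfy $f_n(\mu_n)\to 0$ is not justified: the maximizer in $\sqrt{\mathrm{Ext}_H(X_n)}=\sup_\mu i(H,\mu)/\sqrt{\mathrm{Ext}_\mu(X_n)}$ is the horizontal foliation of the Hubbard--Masur differential of $H$ on $X_n$, and there is no a~priori reason its rescaled extremal length tends to zero. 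Even granting a subsequential limit $\mu_\infty\in G^\perp$, substituting it back does not obviously recover the \emph{same} constant $c$ you extracted from the lower bound, and in the simple-closed-curve case, where $G^\perp$ is large, your appeal to Minsky's product-region theorem is only a hope, not an argument. In short, the plan has the right shape for the uniquely ergodic case but is genuinely incomplete; for the weighted simple closed curve case it is little more than a heuristic. If you intend to actually prove this, consult Miyachi's original argument in \cite{Miy2}, which proceeds through his intersection-number function on the Gardiner--Masur closure rather than through a direct sandwich.
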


\section{Earthquake path directed by a simple closed geodesic}\label{sec:earthquake}
 We fix a simple closed geodesic $\alpha$ on a hyperbolic surface $X$
  and let $\mu$ be a measured lamination whose support is  $\alpha$, with weight $k$, where $k>0$.
 Denote by $E^{t\mu}(X)$ the earthquake path in $\mathcal{T}(M)$ starting at $X$ determined by $\mu$.

\begin{lemma}\label{lem:twist}
There exists a constant $C\geq 1$, which only depends on $\mu$ and $X$, such that $$\frac{1}{C}i(\mu,\beta)^{2} \leq  \varliminf_{t\to\infty}\frac{\mathrm{Ext}_{\beta}(E^{t\mu}(X))}{t^{2}}\leq  \varlimsup_{t\to\infty}\frac{ \mathrm{Ext}_{\beta}(E^{t\mu}(X))}{t^{2}}\leq Ci(\mu,\beta)^{2}$$ for any $\beta\in \mathcal{S}$.
\end{lemma}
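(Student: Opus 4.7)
My plan is to exploit the Dehn-twist periodicity of the earthquake path. By the definition of Thurston earthquake, $E^{\ell_\alpha(X)\cdot\alpha}(X) = \tau_\alpha \cdot X$ as a point of $\mathcal{T}(M)$ (the mapping-class action), and since $\mu = k\alpha$ is preserved by $\tau_\alpha$, the earthquake flow commutes with $\tau_\alpha$. Writing $t = n\,\ell_\alpha(X)/k + s$ with $n \in \mathbb{Z}_{\geq 0}$ and $s \in [0, \ell_\alpha(X)/k)$, this gives
$$E^{t\mu}(X) \;=\; \tau_\alpha^{n} \cdot Z_s, \qquad Z_s := E^{s\mu}(X),$$
and, by the mapping-class invariance of extremal length, $\mathrm{Ext}_\beta\bigl(E^{t\mu}(X)\bigr) = \mathrm{Ext}_{\tau_\alpha^{-n}(\beta)}(Z_s)$.

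Next I would invoke two classical ingredients. First, Thurston's theorem on Dehn-twist dynamics yields
$$\tfrac{1}{n}\,\tau_\alpha^{-n}(\beta) \;\longrightarrow\; i(\alpha,\beta)\cdot \alpha \quad \text{in } \MF \quad (n \to \infty).$$
Second, Kerckhoff's continuous extension of extremal length to $\mathrm{Ext}: \MF \times \mathcal{T}(M) \to \R_{\geq 0}$ is homogeneous of degree $2$ in its first variable. Together these give the pointwise limit
$$\lim_{n \to \infty} \frac{\mathrm{Ext}_{\tau_\alpha^{-n}(\beta)}(Z)}{n^2} \;=\; i(\alpha,\beta)^2 \, \mathrm{Ext}_\alpha(Z)$$
for each fixed $Z \in \mathcal{T}(M)$.

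To pass from the discrete parameter $n$ to the continuous parameter $t$, I would use that $K := \{Z_s : s \in [0,\ell_\alpha(X)/k]\}$ is compact in $\mathcal{T}(M)$ as the continuous image of a compact interval, so the joint continuity of extremal length promotes the displayed limit to uniform convergence in $s$. Since $\mathrm{Ext}_\alpha$ is continuous and strictly positive on $K$, it is bounded between two positive constants $c_1 \leq c_2$, and combining with the asymptotic $n/t \to k/\ell_\alpha(X)$ produces the required two-sided bounds with a constant $C = C(X,\mu)$. The degenerate case $i(\mu,\beta) = 0$ (equivalently, $\beta$ disjoint from $\alpha$) is automatic, since $\tau_\alpha$ fixes the class of $\beta$, so $\mathrm{Ext}_\beta(E^{t\mu}(X)) = \mathrm{Ext}_\beta(Z_s)$ is uniformly bounded on $K$ and both bounds reduce to $0$. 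The main technical point requiring care is the uniform joint continuity of extremal length on $\MF \times K$, which is what allows the discrete $n$-asymptotic to propagate to arbitrary $t$; this rests on Kerckhoff's classical extension and is the one step that deserves a careful verification.
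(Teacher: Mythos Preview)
Your proposal is correct and follows essentially the same strategy as the paper: exploit the Dehn-twist periodicity $E^{n\ell_\alpha(X)\alpha}(X)=\tau_\alpha^n\cdot X$, transfer $\mathrm{Ext}_\beta$ to $\mathrm{Ext}_{\tau_\alpha^{-n}(\beta)}$ on a fixed (or compactly varying) surface, and then use the convergence $\tfrac{1}{n}\tau_\alpha^{-n}(\beta)\to i(\alpha,\beta)\,\alpha$ in $\MF$ together with Kerckhoff's continuity and degree-$2$ homogeneity of extremal length. The only cosmetic differences are in the bookkeeping of the fractional part of $t$: the paper always evaluates at the basepoint $X$ and absorbs the remainder via Kerckhoff's formula $d_T(X,Y)=\tfrac12\log\sup_\gamma \mathrm{Ext}_\gamma(X)/\mathrm{Ext}_\gamma(Y)$ (giving a clean multiplicative factor $e^{\pm 2d}$), whereas you evaluate at the moving point $Z_s$ and invoke compactness of $K=\{Z_s\}$ together with joint continuity of $\mathrm{Ext}$ on $\MF\times\mathcal{T}(M)$; for the disjoint case $i(\alpha,\beta)=0$ the paper quotes Maskit's comparison of hyperbolic and extremal length, while your observation that $\tau_\alpha$ fixes the class of $\beta$ is more direct.
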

\begin{proof}

  Since  $E^{t\mu}(X)=E^{tk\alpha}(X)$, by replacing $t$ by $kt$,
  we may assume that $k=1$.

Consider any $\beta\in \mathcal{S }$.
 If $i(\alpha,\beta)=0$, then $\ell_{\beta}(E^{t\alpha}(X))=\ell_{\beta}(X)$ for all $t\geq 0$.
 By Maskit's inequality (\cite{Maskit}), we have

\begin{equation}\label{equ:Maskit}
\frac{\ell_{\beta}(X)^2}{2\pi|\chi(S)|}\leq \mathrm{Ext}_{\beta}(E^{t\alpha}(X))\leq \frac{1}{2}\ell_{\beta}(X)e^{\ell_{\beta}(X)/2}
\end{equation}
for $t\geq 0$. This implies that for any  $\beta\in \mathcal{S}$ satisfying $i(\alpha,\beta)=0$,
$$\frac{\mathrm{Ext}_{\beta}(E^{t\alpha}(X))}{t^2}\rightarrow 0 $$ as $t\rightarrow\infty$.

Next, we consider the case where  $\beta \in \mathcal{S}$ and $i(\alpha,\beta)\neq 0$.
For any $t>0$, we assume that $t=(n+\epsilon)\ell_\alpha(X)$ for some $n\in \mathbb{N}$ and $0\leq\epsilon<1$.
Let $t_n=n\ell_\alpha(X)$ and let $g_n$ be the  $n$-order
 positive Dehn twist around $\alpha$ (We identify $g_n$ as an element of the mapping class group). Then $g_n(X)=E^{t_n\alpha}(X)$ is the $n$-order
 positive Dehn twist of $X$ around $\alpha$.
Since
Dehn twists preserve the Teichm\"uller distance, we have
$$d_T(E^{t_n\alpha}(X),E^{t\alpha}(X))=d_T(X,E^{\epsilon\ell_\alpha\alpha}(X))\leq d.$$

From Kerckhoff's formula, we have
\begin{equation} \label{Ker}
{e^{-2d}}\mathrm{Ext}_{\beta}(E^{t_n\alpha}(X))\leq \mathrm{Ext}_{\beta}(E^{t\alpha}(X))\leq e^{2d}\mathrm{Ext}_{\beta}(E^{t_n\alpha}(X)).
\end{equation}

Note that $$\mathrm{Ext}_{\beta}(E^{t_n\alpha}(X))=\mathrm{Ext}_{\beta}(g_{n}(X))=\mathrm{Ext}_{g_{n}^{-1}(\beta)}(X).$$
One can consider $g_{n}^{-1}(\beta)$ as a sequence of measured laminations (foliations). It follows that (\cite{FLP})

$$\frac{g_{n}^{-1}(\beta)}{n}\rightarrow i(\alpha,\beta)\times\alpha$$   as   $n\rightarrow \infty$.
Here $i(\alpha,\beta)\times\alpha$ denotes the geodesic lamination given by  $\alpha$ with weight $i(\alpha,\beta)$.

In \cite{Ker2}, Kerckhoff showed that the extremal length function is continuous on $\mathcal{MF}$, thus
$$\mathrm{Ext}_{\frac{g_{n}^{-1}(\beta)}{n}}(X)\rightarrow i(\alpha,\beta)^{2}\mathrm{Ext}_{\alpha}(X)$$ as $n\rightarrow \infty$.
As a result
\begin{equation}\label{converge}
\frac{\mathrm{Ext}_{\beta}(E^{t_n\alpha}(X))}{n^2}=\mathrm{Ext}_{\frac{g_{n}^{-1}(\beta)}{n}}(X)\rightarrow \mathrm{Ext}_{\alpha}(X)i(\alpha,\beta)^{2}
\end{equation}
as $n\rightarrow\infty .$

Combining Equation \eqref{equ:Maskit}, \eqref{Ker} and \eqref{converge},
we conclude that there exists a constant $C\geq 1$, which only depends on $\alpha$ and $X$, such that $$\frac{1}{C}i(\alpha,\beta)^{2} \leq  \varliminf_{t\to\infty}\frac{\mathrm{Ext}_{\beta}(E^{t\alpha}(X))}{t^{2}}\leq  \varlimsup_{t\to\infty}\frac{ \mathrm{Ext}_{\beta}(E^{t\alpha}(X))}{t^{2}}\leq Ci(\alpha,\beta)^{2}$$ for any $\beta\in \mathcal{S}$.
\end{proof}
\begin{corollary}\label{coro:simple}
The earthquake path $E^{t\mu}(X)$ converge to $[\alpha]$ (the projective class of $\alpha$) in the sense of the Gardiner-Masur compactification.
\end{corollary}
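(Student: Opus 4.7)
The plan is to show that every subsequential limit of the earthquake path in $cl_{GM}\mathcal{T}(M)$ coincides with $[\alpha]$. Combining the two-sided bounds of Lemma~\ref{lem:twist} with Miyachi's Theorem~\ref{Miy}, the problem reduces to showing that the Miyachi function $\varepsilon_p$ of any limit point $p$ vanishes on every measured foliation disjoint from $\alpha$.

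First I would verify that the earthquake path escapes every compact subset of $\mathcal{T}(M)$. For any $\beta\in\mathcal{S}$ with $i(\alpha,\beta)>0$ (such $\beta$ exists since $\alpha$ is essential), Lemma~\ref{lem:twist} gives $\liminf_{t\to\infty}\mathrm{Ext}_\beta(E^{t\mu}(X))/t^2>0$, so $\mathrm{Ext}_\beta(E^{t\mu}(X))\to\infty$ and $E^{t\mu}(X)$ leaves every compact subset of Teichm\"uller space. Consequently every accumulation point of $\{E^{t\mu}(X)\}_{t\geq 0}$ in $cl_{GM}\mathcal{T}(M)$ lies on $\partial_{GM}\mathcal{T}(M)$.

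Next I would fix any sequence $t_n\to\infty$ with $E^{t_n\mu}(X)\to p\in\partial_{GM}\mathcal{T}(M)$ and identify $p$. By Miyachi's description, the values $\varepsilon_p(\beta)$ agree up to a common positive scalar with the projective limit of $\sqrt{\mathrm{Ext}_\beta(E^{t_n\mu}(X))}$. Renormalizing by $t_n$, Lemma~\ref{lem:twist} shows that this ratio tends to $0$ for every $\beta\in\mathcal{S}$ with $i(\alpha,\beta)=0$, while remaining bounded between positive constants for a reference $\beta_0\in\mathcal{S}$ with $i(\alpha,\beta_0)>0$. Dividing by the $\beta_0$-term, I obtain $\varepsilon_p(\beta)=0$ whenever $i(\alpha,\beta)=0$. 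By continuity of $\varepsilon_p$ on $\mathcal{MF}$ together with the density of weighted simple closed curves disjoint from $\alpha$ in $\{F\in\mathcal{MF}:i(F,\alpha)=0\}$, this upgrades to $\varepsilon_p(F)=0$ for every $F\in\mathcal{MF}$ with $i(F,\alpha)=0$.

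Since $\alpha$ is a weighted simple closed curve, Theorem~\ref{Miy} then supplies a constant $c>0$ with $\varepsilon_p(H)=c\cdot i(H,\alpha)$ for all $H\in\mathcal{MF}$, which is exactly the projective class of $\alpha$ seen as a point of $\partial_{GM}\mathcal{T}(M)$. Thus $p=[\alpha]$, and as every subsequential limit equals $[\alpha]$ the whole path converges. The main subtlety I anticipate is promoting the pointwise vanishing of $\varepsilon_p$ from simple closed curves disjoint from $\alpha$ to all of $\{F\in\mathcal{MF}:i(F,\alpha)=0\}$; this should follow from the continuity of $\varepsilon_p$ established by Miyachi and the approximation of any such $F$ by weighted simple closed curves supported in the complement of $\alpha$.
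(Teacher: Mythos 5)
Your overall strategy is exactly the paper's: identify any subsequential limit $p$, use Lemma~\ref{lem:twist} to show $\varepsilon_p$ vanishes on $\{F\in\mathcal{MF}: i(F,\alpha)=0\}$, and invoke Theorem~\ref{Miy} to conclude $p=[\alpha]$. The step for simple closed curves $\beta$ with $i(\alpha,\beta)=0$ (ratio against a reference $\beta_0$ with $i(\alpha,\beta_0)>0$) is correct, and your preliminary observation that the path escapes every compact set is a reasonable addition that the paper leaves implicit.

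However, the step you flag as the ``main subtlety'' does contain a genuine gap: weighted simple closed curves \emph{disjoint} from $\alpha$ are \textbf{not} dense in $\{F\in\mathcal{MF}: i(F,\alpha)=0\}$. For instance, if $\alpha$ separates $M$ into two pieces and $F=\gamma_1+\gamma_2$ with $\gamma_i$ essential in the $i$-th piece, then every simple closed curve $c$ with $i(c,\alpha)=0$ lies in one piece (or is isotopic to $\alpha$), hence satisfies $i(c,\delta)=0$ for all curves $\delta$ in the other piece; no sequence $a_nc_n$ of such curves can converge to $F$. The same problem occurs for $F=\alpha+\gamma$. So you cannot reduce to curves on which you have already shown $\varepsilon_p=0$. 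The repair is to use the full quantitative upper bound of Lemma~\ref{lem:twist}, which holds for \emph{all} $\beta\in\mathcal{S}$: for an arbitrary approximating sequence $a_n\gamma_n\to F$ (Thurston density in $\mathcal{MF}$, with no disjointness constraint), continuity of $i$ gives $i(\alpha,a_n\gamma_n)\to i(\alpha,F)=0$, and then
$$\frac{\mathrm{Ext}_{a_n\gamma_n}(X_j)}{\mathrm{Ext}_{\beta_0}(X_j)}\leq C^2\,\frac{i(\alpha,a_n\gamma_n)^2}{i(\alpha,\beta_0)^2}$$
forces $\varepsilon_p(a_n\gamma_n)\to 0$, whence $\varepsilon_p(F)=0$ by continuity of $\varepsilon_p$. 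This is precisely how the paper's proof handles the point; with that substitution your argument is complete.
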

\begin{proof}  Let $\mathcal{G}$ be the limit set of the earthquake path $\{E^{t\mu}(X)\}_{t\geq0}$ in  the Gardiner-Masur boundary.
We are going to show that $\mathcal{G}$ consists of a single point, the projective class of $\alpha$.

Suppose that $p\in \mathcal{G}$. By definition,  there is a sequence $\{E^{s_j\mu}(X)\}$, $j=1,2,\cdots$,  converging to $p$.
We set $X_j=E^{s_j\mu}(X)$.
We choose  some $\beta_{0}\in \mathcal{S}$ such that $i(\alpha,\beta_{0})\neq0$. Then $\varepsilon_{p}(\beta_{0})\neq0 $
and  we have $$\lim_{j\rightarrow\infty}\frac{\mathrm{Ext}_{\gamma}(X_j)}{\mathrm{Ext}_{\beta_{0}}(X_{j})}\rightarrow \frac{\varepsilon_{p}(\gamma)^{2}}{\varepsilon_{p}(\beta_{0})^{2}}$$ for any $\gamma\in \mathcal{S}$.

Consider any weighted simple closed curve $a\gamma$ with $0\leq i(\alpha,a \gamma)\leq \xi$. It follows from
Lemma \ref{lem:twist} that
$$\frac{\mathrm{Ext}_{a\gamma}(X_{j})}{\mathrm{Ext}_{\beta_{0}}(X_{j})}\leq C^2\frac{i(\alpha,a\gamma)^2}{i(\alpha,\beta_0)^2}\leq \frac{C^2\xi^2}{i(\alpha,\beta_0)^2}.$$
For any $F\in \mathcal{MF}$ with $i(\alpha,F)=0$, since $\R_+\times \mathcal{S}$ is dense in $\MF$,
there are positive numbers $a_n$ and distinct simple closed curves $\gamma_n$ such that $a_n\gamma_n\rightarrow F$  in $\mathcal{MF}$ as $n\rightarrow \infty$ .
Then it follows from the above inequality  that $\varepsilon_{p}(a_n\gamma_n)\rightarrow 0$,
 therefore we have
$\varepsilon_{p}(F)=0$.
By Theorem \ref{Miy}, we  conclude that $p=[\alpha]$ in the Gardiner-Masur boundary.
\end{proof}

\begin{proof}[Proof of Theorem \ref{sim & uniq}]
In the case where $\mu$ is a weighted simple closed curve,
the statement of Theorem \ref{sim & uniq} follows from Corollary \ref{coro:simple}.

Note that for any measured lamination $\mu$,
the earthquake path $E^{t\mu}(X)$ converges to $[\mu]$ under the
Thurston embedding (see Lemma 17 of \S 5 in  \cite{Bon1}). By Corollary 1 of \S 6 in \cite{Miy2}, if $\{Y_n\}_{n=1}^{\infty} $ is a sequence in $\mathcal{T}(M)$ converging to $p\in\partial_{GM}\mathcal{T}(M)$ and if furthermore the sequence converges to $[\mu]$ under the Thurston embedding, then $\mu$ is uniquely ergodic if and only if the zero set of $\varepsilon_p$ is $\{t\mu \ | \ t\geq0\}$.
As a result,
if $\lambda$ is uniquely ergodic (this is equivalent to the fact that the set $\{F\in\mathcal{MF} \ | \  i(F,\mu)=0\}$ is the same as $\{t\mu \ | \ t\geq0\}$), then it follows from Theorem \ref{Miy} that the earthquake path $E^{t\lambda}(X)$ converges to $[\lambda]$ on the Gardiner-Masur boundary.
\end{proof}

It is interesting to study the quasiconvexity of the extremal length functions along an earthquake path.
A function $f:  \mathbb{R} \rightarrow \mathbb{R}_+$ is  called \textit{quasiconvex}
 if there exists some constant $K\geq 1$ such that   $$f(t_2)\leq K \max\{f(t_1),f(t_3)\}$$
 for any $t_1<t_2<t_3$.

\begin{proposition}
Let $\mu$ be a weighted simple closed geodesic.
Then the extremal length of any simple closed curve $\gamma$ is quasiconvex along
the earthquake path $E^{t\mu}(X)$.
\end{proposition}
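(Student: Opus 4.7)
The plan is to analyze $f(t) := \mathrm{Ext}_\gamma(E^{t\mu}(X))$ by combining the quadratic asymptotic estimate from Lemma \ref{lem:twist} with the continuity and positivity of extremal length on compact subsets of the earthquake path, splitting cases according to whether $i(\mu, \gamma)$ vanishes.

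When $i(\mu, \gamma) = 0$, the simple closed geodesic $\alpha = \mathrm{supp}\,\mu$ and $\gamma$ are disjoint, so the hyperbolic length $\ell_\gamma$ is preserved along the entire earthquake path. Maskit's inequality \eqref{equ:Maskit} applied to $\gamma$ then pins $f(t)$ between two positive constants depending only on $\gamma$ and $X$, and quasiconvexity follows immediately with $K$ equal to the ratio of these bounds.

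When $i(\mu, \gamma) > 0$, Lemma \ref{lem:twist} supplies constants $A, B > 0$ and $T > 0$ such that $A t^2 \leq f(t) \leq B t^2$ for all $t \geq T$, while on the compact interval $[0, T]$ the continuous positive function $f$ is bounded above by some $M < \infty$ and below by some $m > 0$. For any $0 \leq t_1 < t_2 < t_3$, I will distinguish the case $t_2 \geq T$ (then $t_3 \geq T$ as well, so $f(t_2)/f(t_3) \leq B t_2^2/(A t_3^2) \leq B/A$) from the case $t_2 < T$ (then $f(t_2) \leq M$, while $\max\{f(t_1), f(t_3)\}$ is at least $m$ if $t_3 \leq T$ and at least $A T^2$ if $t_3 > T$). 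Choosing
\[
K \;:=\; \max\!\left\{\tfrac{B}{A},\ \tfrac{M}{m},\ \tfrac{M}{A T^2}\right\}
\]
then yields $f(t_2) \leq K \max\{f(t_1), f(t_3)\}$ in every case.

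The main (mild) obstacle is the mixed regime where $t_2 \in [0, T]$ but $t_3 > T$: one must balance the bounded fluctuations of $f$ on the compact piece against its quadratic growth at infinity, which is exactly what the lower bound $f(t_3) \geq A T^2$ from Lemma \ref{lem:twist} makes possible. For the bi-infinite earthquake path $t \in \mathbb{R}$, the proof of Lemma \ref{lem:twist} extends verbatim to $t \to -\infty$ by using negative Dehn twists in place of positive ones, and the same case analysis (using the inequality $|t_2| \leq \max(|t_1|, |t_3|)$) delivers quasiconvexity on all of $\mathbb{R}$.
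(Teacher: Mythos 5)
Your proof is correct and follows essentially the same route as the paper: the case $i(\mu,\gamma)=0$ is handled by Maskit's inequality, and the case $i(\mu,\gamma)>0$ combines the quadratic bounds of Lemma \ref{lem:twist} for $t\geq T$ with compactness of the path on $[0,T]$; your explicit three-way case analysis and the constant $K=\max\{B/A,\,M/m,\,M/(AT^2)\}$ merely make the paper's appeal to compactness concrete. The remark on extending to the bi-infinite path via negative Dehn twists is a small addition beyond what the paper records, but the core argument is the same.
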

\begin{proof}
Suppose that $\mu$ is a multiple of $\alpha$.
If $i(\alpha,\gamma)=0$, then from the above discussion that there exists some constant $K_\gamma$, depending only on  $\ell_\gamma(X)$, such that $\frac{1}{K_\gamma}\leq \mathrm{Ext}_\gamma(X_t)\leq K_\gamma$, for any $t\geq0$. It is obvious that $\mathrm{Ext}_\gamma$ is quasiconvex.

If $i(\alpha,\gamma)\neq0$, then there exists some constant $T=T(\gamma,\mu)$ such that for any $t\geq T$,
$$\frac{1}{C}i(\alpha,\gamma)^{2}\leq  \frac{\mathrm{Ext}_{\gamma}(E^{t\mu}(X))}{t^{2}}\leq Ci(\alpha,\gamma)^{2}.$$
Thus if $T\leq t_1\leq t_2$ we get $$\mathrm{Ext}_{\gamma}(E^{t_1 \mu}(X))\leq C i(\alpha,\gamma)^{2}t_1^2\leq C^2\mathrm{Ext}_{\gamma}(E^{t_2 \mu}(X)).$$
Since $\{E^{t\mu}\}_{0\leq t\leq T}$ is a compact set in $\mathcal{T}(M)$,
there must be some constant $K=K(\gamma,\mu)$ such that $\mathrm{Ext}_\gamma$ is $K$-quasicovex.
\end{proof}

\section{Earthquakes directed by multicurves}
In this section,
 we discuss the convergence of earthquake paths directed by rational measured foliations.
The simplest case is the case where  \begin{equation}\label{eq:multi}
\nu=\ell_{\alpha_1}(X)\alpha_1+\ell_{\alpha_2}(X)\alpha_2,
\end{equation}
where $\alpha_1,\alpha_2 \in \mathcal{S}$ with $i(\alpha_1,\alpha_2)=0$.
In this case, $E^{n\nu}(X)$ is the composition of an $n$-order positive Dehn twist around $\alpha_1$ and an $n$-order positive Dehn twist around $\alpha_2$.
Note that since $\alpha_1,\alpha_2$ are disjoint,
it is irrelevant which Dehn twist is performed first.

A simple closed curve $\gamma$ on $M$ is called \emph{separating} if $M\backslash\gamma$ has two connected components.

\begin{proposition}[Necessary condition for convergence]\label{necessary}\label{nec}
Let $\nu$ be a measured lamination as  in \eqref{eq:multi}.
If the earthquake path  $\{X_t=E^{t\nu}(X)\}_{t\geq0}$ converges to a point $p$ on the Gardiner-Masur boundary, then
$$\frac{\mathrm{Ext}_{i(\alpha_1,\gamma)\alpha_1+i(\alpha_2,\gamma)\alpha_2}
(X_t)}{\mathrm{Ext}_{i(\alpha_1,\gamma)\alpha_1+i(\alpha_2,\gamma)\alpha_2}
(X_0)}=\frac{\mathrm{Ext}_{\alpha_1}(X_t)}{\mathrm{Ext}_{\alpha_1}(X_0)}
=\frac{\mathrm{Ext}_{\alpha_2}(X_t)}{\mathrm{Ext}_{\alpha_2}(X_0)},\forall \  \gamma\in \mathcal{S}.$$
Moreover, if both $\alpha_1,\alpha_2$ are separating, or  both $\alpha_1,\alpha_2$ are non-separating,
then the above equation can be refined to
\begin{equation}\label{nec-equ}
 \frac{\mathrm{Ext}_{a_1\alpha_1+a_2\alpha_2}(X_t)}{\mathrm{Ext}_{a_1\alpha_1+a_2\alpha_2}(X_0)}
=\frac{\mathrm{Ext}_{\alpha_1}(X_t)}{\mathrm{Ext}_{\alpha_1}(X_0)}=\frac{\mathrm{Ext}_{\alpha_2}(X_t)}{\mathrm{Ext}_{\alpha_2}(X_0)},\forall \  a_1,a_2>0.
\end{equation}
\end{proposition}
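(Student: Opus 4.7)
My plan is to assume the convergence $X_t\to p$ and exploit the periodic-mod-Dehn-twist structure of the earthquake path. Writing $T:=T_{\alpha_1}\circ T_{\alpha_2}$, the disjointness of $\alpha_1,\alpha_2$ ensures that earthquakes and Dehn twists along them commute pairwise, so $X_{n+s}=T^n(X_s)$ for every $n\in\mathbb{N}$ and $s\in[0,1)$, where $X_s:=E^{s\nu}(X)$. Adapting the proof of Lemma~\ref{lem:twist} to two commuting Dehn twists, I would first establish
\[
\frac{\mathrm{Ext}_\beta(X_{n+s})}{n^2}\longrightarrow \mathrm{Ext}_{\nu_\beta}(X_s)\qquad\text{as }n\to\infty,
\]
with $\nu_\beta:=i(\alpha_1,\beta)\alpha_1+i(\alpha_2,\beta)\alpha_2$. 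The inputs are the standard convergence $T^{-n}\beta/n\to\nu_\beta$ in $\mathcal{MF}$ together with continuity of extremal length on $\mathcal{MF}$ (\cite{Ker2}).

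Next I would extract the ratio identity. Miyachi's description of $\varepsilon_p$ gives $\mathrm{Ext}_{\beta_1}(X_t)/\mathrm{Ext}_{\beta_2}(X_t)\to \varepsilon_p(\beta_1)^2/\varepsilon_p(\beta_2)^2$ whenever $\varepsilon_p(\beta_2)\ne 0$. Evaluating along the subsequence $t=n+s$ with $s\in[0,1)$ fixed and using the first step, this limit equals $\mathrm{Ext}_{\nu_{\beta_1}}(X_s)/\mathrm{Ext}_{\nu_{\beta_2}}(X_s)$ whenever $\nu_{\beta_1},\nu_{\beta_2}\ne 0$. The right-hand side is independent of $s$, so the left-hand side must be as well; cross-multiplying yields a function $K(s)$ with
\[
\frac{\mathrm{Ext}_{\nu_\gamma}(X_s)}{\mathrm{Ext}_{\nu_\gamma}(X_0)}=K(s)\qquad \text{for all } \gamma\in\mathcal{S}\text{ with }\nu_\gamma\ne 0.
\]
Because $T$ preserves $\nu_\gamma,\alpha_1$ and $\alpha_2$ as unweighted curve systems, $\mathrm{Ext}_{\nu_\gamma}(X_t)=\mathrm{Ext}_{\nu_\gamma}(X_{t\bmod 1})$ and similarly for $\alpha_i$, so the identity lifts from $s\in[0,1)$ to every $t\ge 0$.

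To insert $\alpha_1,\alpha_2$ into the family, I would choose $\gamma_0\in\mathcal{S}$ disjoint from $\alpha_2$ with $i(\alpha_1,\gamma_0)>0$; then $\nu_{\gamma_0}$ is a positive multiple of $\alpha_1$, and the quadratic scaling $\mathrm{Ext}_{c\mu}=c^2\mathrm{Ext}_\mu$ turns the ratio identity for $\nu_{\gamma_0}$ into the one for $\alpha_1$; the same works for $\alpha_2$. For the refinement with arbitrary $a_1,a_2>0$, I would approximate $a_1\alpha_1+a_2\alpha_2$ by scaled multiples $c_k\nu_{\gamma_k}$ and invoke continuity of extremal length on $\mathcal{MF}$; the hypothesis that $\alpha_1,\alpha_2$ are of the same separating type enters here to guarantee that the integer pairs $(i(\alpha_1,\gamma),i(\alpha_2,\gamma))$, $\gamma\in\mathcal{S}$, are projectively dense enough in $\mathbb{R}_{\ge 0}^2$ to approximate any positive ray without parity obstruction. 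The main obstacle I anticipate is the second step: the crucial move is that $\mathrm{Ext}_{\nu_\beta}(X_s)/\mathrm{Ext}_{\nu_\beta}(X_0)$ must come out independent of $\beta$, which requires playing the subsequence limit (depending on both $s$ and $\beta$) against the full-sequence limit (depending on $\beta$ but not $s$); secondarily, one must check that enough $\beta\in\mathcal{S}$ satisfy $\varepsilon_p(\beta)\ne 0$, which is automatic since the first step forces $\varepsilon_p(\beta)\propto \sqrt{\mathrm{Ext}_{\nu_\beta}(X_0)}$.
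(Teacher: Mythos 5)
Your proposal is correct and follows essentially the same route as the paper: the identification $X_{n+s}=T^n(X_s)$, the limit $\mathrm{Ext}_\beta(X_{n+s})/n^2\to \mathrm{Ext}_{\nu_\beta}(X_s)$, Miyachi's $\varepsilon_p$ to force $s$-independence of the ratios, test curves disjoint from one of the $\alpha_i$ to isolate $\mathrm{Ext}_{\alpha_i}$, and density of the normalized intersection vectors $(i(\alpha_1,\gamma),i(\alpha_2,\gamma))/n$ (where the common separating type is used) for the refinement. Your extra observation that $T$ fixes $\nu_\gamma$, so the identity proved for $s\in[0,1)$ propagates to all $t\ge 0$, is a worthwhile point that the paper leaves implicit.
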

\remark Unfortunately, we are not able to prove or disprove
the above equality. On the other hand,
if we replace extremal length by hyperbolic length, then Equation \eqref{nec-equ}  holds for any disjoint $\alpha_1,\alpha_2$ since
\begin{eqnarray*}
l_{a_1\alpha_1+a_2\alpha_2}(X_t)&=&a_1l_{\alpha_1}(X_t)+a_2l_{\alpha_2}(X_t),\\ l_{\alpha_1}(X_t)&=&l_{\alpha_1}(X_0),\\
l_{\alpha_2}(X_t)&=&l_{\alpha_2}(X_0).
\end{eqnarray*}
\begin{proof}[Proof of Proposition \ref{necessary}]

Let $h_n$ be the composition of an $n$-order positive Dehn twist around $\alpha_1$ and an $n$-order positive Dehn twist around $\alpha_2$.

For any $\gamma \in \mathcal{S}$ we have $$\frac{h_n^{-1}(\gamma)}{n}\rightarrow i(\alpha_1,\gamma)\alpha_1+i(\alpha_2,\gamma)\alpha_2$$ in $\mathcal{MF}$ as $\n.$
Therefore we  get
$$ \frac{\mathrm{Ext}_{\gamma}(X_{n+t})}{n^2}=\frac{\mathrm{Ext}_{\gamma}(h_n(X_t))}{n^2}\rightarrow \mathrm{Ext}_{i(\alpha_1,\gamma)\alpha_1+i(\alpha_2,\gamma)\alpha_2}(X_t)\ \text{as} \ \n.$$

If the earthquake path $\{E^{t\nu}(X)\}$ converges to a point $p$ on the Gardiner-Masur boundary,
 then for any $\gamma_1,\gamma_2\in \mathcal{S}$ with $\varepsilon_p(\gamma_2)\neq0$, we have
$$\frac{\varepsilon_{p}(\gamma_1)}{\varepsilon_{p}(\gamma_2)}=
\lim_{n\rightarrow\infty}\sqrt{\frac{\mathrm{Ext}_{\gamma_1}(X_{n+t})}{\mathrm{Ext}_{\gamma_2}(X_{n+t})}}
=\sqrt{\frac{\mathrm{Ext}_{i(\alpha_1,\gamma_1)\alpha_1+i(\alpha_2,\gamma_1)\alpha_2}
(X_t)}{\mathrm{Ext}_{i(\alpha_1,\gamma_2)\alpha_1+i(\alpha_2,\gamma_2)\alpha_2}
(X_t)}}$$
for $0\leq t<1.$

In particular,
$$\frac{\mathrm{Ext}_{i(\alpha_1,\gamma_1)\alpha_1+i(\alpha_2,\gamma_1)\alpha_2}
(X_t)}{\mathrm{Ext}_{i(\alpha_1,\gamma_2)\alpha_1+i(\alpha_1,\gamma_2)\alpha_2}
(X_t)}=\frac{\mathrm{Ext}_{i(\alpha_1,\gamma_1)\alpha_1+i(\alpha_2,\gamma_1)\alpha_2}
(X_0)}{\mathrm{Ext}_{i(\alpha_1,\gamma_2)\alpha_1+i(\alpha_2,\gamma_2)\alpha_2}
(X_0)},$$
which implies
$$\frac{\mathrm{Ext}_{i(\alpha_1,\gamma_1)\alpha_1+i(\alpha_2,\gamma_1)\alpha_2}
(X_t)}{\mathrm{Ext}_{i(\alpha_1,\gamma_1)\alpha_1+i(\alpha_1,\gamma_1)\alpha_2}
(X_0)}=\frac{\mathrm{Ext}_{i(\alpha_1,\gamma_2)\alpha_1+i(\alpha_2,\gamma_2)\alpha_2}
(X_t)}{\mathrm{Ext}_{i(\alpha_1,\gamma_2)\alpha_1+i(\alpha_2,\gamma_2)\alpha_2}
(X_0)}.$$

Since $\alpha_1,\alpha_2$ are disjoint, there exist $\beta_1, \beta_2\in \mathcal{S}$
such that $i(\beta_i, \alpha_i)>0, i(\beta_i, \alpha_j)=0$ if $i\neq j$.
Replacing $\gamma_2$ in the above equation above by $\beta_1,\beta_2$ respectively, we get
$$\frac{\mathrm{Ext}_{i(\alpha_1,\gamma_1)\alpha_1+i(\alpha_2,\gamma_1)\alpha_2}
(X_t)}{\mathrm{Ext}_{i(\alpha_1,\gamma_1)\alpha_1+i(\alpha_1,\gamma_1)\alpha_2}
(X_0)}=\frac{\mathrm{Ext}_{\alpha_1}(X_t)}{\mathrm{Ext}_{\alpha_1}(X_0)}
=\frac{\mathrm{Ext}_{\alpha_2}(X_t)}{\mathrm{Ext}_{\alpha_2}(X_0)},$$
which completes the proof of the first part of Proposition \ref{nec}.

To prove Equation (\ref{nec-equ}), by continuity it suffices to show that it holds when $a_1,a_2$ are positive rational numbers.
It remains to show that
$$\{\frac{1}{n}\left(i(\alpha_1,\gamma),i(\alpha_2,\gamma)\right) :
\gamma\in \mathcal{S}, n \in \mathbb{N}\}$$
is dense in $\{(a_1,a_2) : a_1,a_2 \in \mathbb{Q}_+\} $.
We consider the case where  both $\alpha_1,\alpha_2$ are separating simple closed curves;
the case where both $\alpha_1,\alpha_2$ are un-separating can be justified by a similar argument.

Given any $(p_1/ q_1,p_2/q_2)\in \mathbb{Q}_+\times\mathbb{Q}_+$. Since $(2p_1q_2,2p_2q_1)/(2q_1q_2)=(p_1/ q_1,p_2/q_2)$,
it suffices  to show that for any $\epsilon>0$, there exists a $\gamma\in \mathcal{S}$ such that
$$|i(\gamma,\alpha_1)-2p_1q_2|<\epsilon,$$
$$|i(\gamma,\alpha_2)-2p_2q_1|<\epsilon.$$
Note that (by the assumption that  both $\alpha_1,\alpha_2$ are separating) there are $\gamma_1,\gamma_2\in \mathcal{S}$ such that
$i(\gamma_i,\alpha_i)=2,(\gamma_i,\alpha_j)=0\ (i,j\in\{1,2\}, i\neq j).$

Let us denote by $T^n_{\alpha}$ the n-order Dehn twist around $\alpha$.
Then $\gamma'_i=T^{p_iq_{i+1}}_{\gamma_i}(\alpha_i)$ (here $q_3=q_1)$ satisfies the following properties:
\begin{eqnarray*}
i(\gamma'_1,\gamma'_2)&=&0,\\
i(\gamma'_i,\alpha_i)&=&2p_iq_{i+1},\\
i(\gamma'_i,\alpha_j)&=&0, \ \text{if} \ i\neq j.
\end{eqnarray*}
Therefore any simple closed curve $\gamma$ approximating  $\gamma'_1+\gamma'_2$ will be the one we are looking for.
\end{proof}

Define $\nu$ as above. The proof of Proposition \ref{necessary} implies that for any $X\in \mathrm{T}(M)$,
$h_n(X)$ converges to the limit $p$ on the Gardiner-Masur boundary such that

$$\varepsilon_p(\gamma)=\mathrm{Ext}_{i(\alpha_1,\gamma)\alpha_1+i(\alpha_2,\gamma)\alpha_2}
(X)$$
up to a multiplicative constant. Gardiner and Masur \cite{GM} showed that the earthquake path ${E^{t\nu}(X)}$
 cannot converge to any measured lamination on the Gardiner-Masur boundary. We sketch the proof here.

Otherwise, assume that $h_n(X)\rightarrow [F]\in \mathcal{PMF}$. Note that for any $\gamma\in \mathcal{S}$,
$$ \frac{\mathrm{Ext}_{\gamma}(h_n(X))}{n^2}\rightarrow \mathrm{Ext}_{i(\alpha_1,\gamma)\alpha_1+i(\alpha_2,\gamma)\alpha_2}(X).$$
In particular, any $\gamma\in \mathcal{S}$ with $i(\gamma,\alpha_j)=0, j=1,2$ satisfies
 $$\frac{\mathrm{Ext}_{\gamma}(h_n(X))}{n^2}\rightarrow 0.$$
It is not hard to see that $F$ is a measured foliation with non-critical leaves  homotopic to $\alpha_1$ or $\alpha_2$.

Since $$\lim_{n\rightarrow\infty}\sqrt{\frac{\mathrm{Ext}_{\beta_1}(h_n(X))}
{\mathrm{Ext}_{\beta_2}(h_n(X))}}=\sqrt{\frac{\mathrm{Ext}_{\alpha_1}(X)}{\mathrm{Ext}_{\alpha_2}(X)}}
=\frac{i(F,\beta_1)}{i(F,\beta_2)},$$
we can assume that $F=\sqrt{\mathrm{Ext}_{\alpha_1}(X)}\alpha_1+\sqrt{\mathrm{Ext}_{\alpha_2}(X)}\alpha_2$ (i.e.,  the height of the cylinder of $\alpha_i$ is $\sqrt{\mathrm{Ext}_{\alpha_i}(X)}.$

Let $\gamma\in \mathcal{S}$ with $i(\gamma,\alpha_i)=k_i, i=1,2$. Then
$$ \frac{\mathrm{Ext}_{\gamma}(h_n(X))}{n^2}\rightarrow \mathrm{Ext}_{k_1\alpha_1+k_2\alpha_2}(X),$$
and  $$\sqrt{\frac{\mathrm{Ext}_{\gamma}(h_n(X))}{n^2}}\rightarrow i(F,\gamma)=k_1 \sqrt{\mathrm{Ext}_{\alpha_1}(X)}+k_2 \sqrt{\mathrm{Ext}_{\alpha_2}(X)}.$$
However, if we assume that $q$ is a holomorphic quadratic differential on $X$ whose vertical measured foliation is equal to $k_1 \alpha_1+k_2 \alpha_2$
($q$ is called the \emph{Hubbard-Masur differential} of $k_1 \alpha_1+k_2 \alpha_2$),
then
\begin{eqnarray*}
\mathrm{Ext}_{k_1\alpha_1+k_2\alpha_2}(X)&=& \| q \| =k_1 L_q(\alpha_1)+k_2 L_q(\alpha_2) \\
&=&(\frac{k_1L_q(\alpha_1)}{\parallel q\parallel^{\frac{1}{2}}}+\frac{k_2L_q(\alpha_2)}{\parallel q\parallel^{\frac{1}{2}}})^{2} \\
&<& k_1 \sqrt{\mathrm{Ext}_{\alpha_1}(X)}+k_2 \sqrt{\mathrm{Ext}_{\alpha_2}(X)}.
\end{eqnarray*}
This is a contradiction.
(The first equality follows from an result of Kerckhoff \cite{Ker2} saying that the extremal length of a measured foliation
$F$ is equal to the area of the Hubbard-Masur differential whose vertical measured foliation is equivalent to $F$).

\section{The Teichm\"uller horocycle flow}\label{sec:horocycle}

In this section, we investigate the convergence of  the horocycle flow in Teichm\"uller space to the Thurston or  Gardiner-Masur boundary.

\subsection{The Teichm\"uller horocycle flow $h^t$}
Let $\mathcal{QT}(M)$ be the bundle of holomorphic quadratic differentials over the Teichm\"uller space $\mathcal{T}(M)$
and $\pi:\mathcal{QT}(M) \to \mathcal{T}(M)$ be the natural projection.

Recall that any $q\in \mathcal{QT}(M)$ can be defined by a pair of transverse measured foliations on $M$:
 the vertical measured foliation $V_q$  and the horizontal measured foliation $H_q$.
Specifically, in a neighborhood of a
nonsingular point, there are natural coordinates $z = x + iy $ so
that the leaves of $H_q$ are given by $y$ = constant, and
the transverse measure of $H_q$ is $|dy|$. Similarly, the leaves of
$V_q$ are given by $x$ = constant, and the transverse
measure is $|dx|$. The foliations $H_q$ and
$V_q$ have zero set of $q$ as their common singular
set, and at each zero of order $k$ they have a $k+2$-pronged
singularity, locally modeled on the singularity at the origin of
$z^k dz^2$.

 For simplicity, we set $q=(F_1, F_2)$ if $F_1=V_q,\ F_2=H_q$.
  The Teichm\"uller horocycle flow $h^t(q)\in \mathcal{QT}(M)$ is defined as follows. In local coordinates $z=x+iy$ on $M$ in which $q=d z^2$,
  the vertical measured foliation $V(t)$ of $h^t(q)$ is determined by the form $|dx+tdy|$. More precisely, the leaves of $V(t)$ are given by $x+ty=$constant, and the transverse measure is given by $|dx+tdy|$. Note that the horizontal measured foliation $H(t)$ is left invariant (see \cite{Ma2} for more details).

\subsection{Flat metrics}
It is well known that a  holomorphic quadratic differential $q$ induces a flat metric on $M$,
which is $\sqrt{|q|}|dz|$ in local coordinates.
It is easy to see that two quadratic differentials which induce the same flat metric differ only by a rotation.
Accordingly the \emph{space of flat metrics} is defined as
\[\text{Flat}(M)=\mathcal{Q}\mathcal{T}(M)/{\thicksim}, \]
where we set ${q\thicksim e^{i\theta}q}$.

We let
$$\mathcal{Q}^1\mathcal{T}(M)=\{q\in\mathcal{QT}(M):||q||=\iint_{M}|q|dxdy=1\}.$$
For more details on flat metrics, see \cite{DLR} and the references therein.

\subsection{Geodesic currents}
Let $\mathbb{H}^2$ be the Poincar\'e disc and $\partial{\mathbb{H}^2}$ be its ideal boundary,
identified with $S^1$.
 Recall that any two distinct points in $S^1$ determine a complete geodesic on $\mathbb{H}^2$.
Consider the space
$$ G(\mathbb{H}^2)=(S^1\times S^1\backslash {\Delta})/(x,y)\thicksim (y,x)$$
of unoriented geodesics on the Poincar\'e disc.
We fix a hyperbolic structure on $M$ (where $\pi_1(M)$ is identified with the Fuchsian group).
Let us endow  $G(\mathbb{H}^2)$  with the diagonal action of $\pi_1(M)$. A \textit{geodesic current} is a  $\pi_1(M)$-invariant Radon measure on $ G(\mathbb{H}^2)$. Let $C(M)$ be the space of geodesic currents on $M$ equipped with the weak* topology and $\mathcal{P}C(M)=C(M)/\R_+$ the space of projective geodesic currents.

We denote by $\mathcal{C}$ the union of the space of homotopy classes of essential closed curves and the space of homotopy classes of essential proper paths on $M$, where an \emph{essential proper path} is a path $\alpha: (a,b)\to M$ for which $\alpha(t)$ tends to a puncture point as $t\to a\ (or\ b)$.

The followings properties of $C(M)$ will be used in this paper.
\begin{theorem}[Bonahon \cite{Bon2}]
The geometric intersection number $i:\mathcal{ML} \times \mathcal{ML} \to \R$ has a continuous, bilinear extension to
$$ i: C(M) \times C(M) \to \R.$$
\end{theorem}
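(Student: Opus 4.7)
The plan is to build $i(\mu,\nu)$ as the total mass of a canonical $\pi_1(M)$-invariant Radon measure concentrated on pairs of transversely crossing geodesics in $\mathbb{H}^2$, and then to verify that this functional is bilinear, continuous in the weak$^*$ topology on $C(M)$, and recovers the geometric intersection number on the dense subset of weighted simple closed geodesics. The guiding picture is that for weighted closed geodesics $a\gamma$ and $b\delta$ the classical number $i(a\gamma,b\delta)=ab\cdot\#(\gamma\cap\delta)$ literally counts $\pi_1(M)$-orbits of crossing axis pairs in $\mathbb{H}^2$; the sought-after extension replaces the Dirac combs on axes by arbitrary geodesic currents.

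Concretely, set
\[ DG=\{(g_1,g_2)\in G(\mathbb{H}^2)\times G(\mathbb{H}^2) : g_1\pitchfork g_2\}, \]
an open subset of $G(\mathbb{H}^2)^2$ that is invariant under the diagonal action of $\pi_1(M)$. Since two transverse complete geodesics in $\mathbb{H}^2$ meet in exactly one point, the equivariant map $DG\to\mathbb{H}^2$ recording this intersection point realises $DG/\pi_1(M)$ as a locally compact, second countable space, which one can think of as parametrised by the intersection point in $M$ together with a pair of tangent directions there. For any $\mu,\nu\in C(M)$ the product $\mu\times\nu$ restricts to a $\pi_1(M)$-invariant Radon measure on $DG$ and descends to a finite Radon measure on $DG/\pi_1(M)$. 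I would then define
\[ i(\mu,\nu):=(\mu\times\nu)\bigl(DG/\pi_1(M)\bigr). \]
Bilinearity is immediate from the bilinearity of $(\mu,\nu)\mapsto\mu\times\nu$. A direct computation on the currents associated to weighted simple closed geodesics reproduces the classical intersection number, and once continuity is in hand, the density of $\R_+\times\mathcal{S}$ in $\ML$ extends the agreement to all of $\ML\times\ML$.

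The main obstacle is continuity. Because $DG/\pi_1(M)$ is open but not compact, weak$^*$ convergence $\mu_n\to\mu$ and $\nu_n\to\nu$ does not automatically yield convergence of the total masses: mass could in principle escape toward the ``boundary'' consisting of tangent pairs or pairs sharing an endpoint at infinity. To control this I would exhaust $DG$ by a countable collection of relatively compact product boxes $U_k\times V_k$ with $\overline{U_k}\cap\overline{V_k}=\emptyset$ in $G(\mathbb{H}^2)$, verify that (for generic choices of the boxes) $\mu\times\nu$ assigns zero mass to the topological boundary of each $U_k\times V_k$, and apply a Portmanteau-type argument on each box. Summing these contributions using a uniform tightness estimate coming from the cocompactness of $\pi_1(M)$ on the support of the limit current then upgrades the box-by-box convergence to convergence of the full mass, yielding continuity. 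The technical heart of the argument is precisely this no-mass-escape at the degenerate locus of non-transverse pairs.
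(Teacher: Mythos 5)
First, a remark on the comparison itself: the paper does not prove this statement at all --- it is quoted as a black box from Bonahon \cite{Bon2} --- so what follows measures your proposal against Bonahon's original argument. Your construction is in fact exactly his: restrict $\mu\times\nu$ to the open $\pi_1(M)$-invariant set $DG$ of transversally crossing pairs, push down to $DG/\pi_1(M)$ (the action there is properly discontinuous because the crossing-point map to $\mathbb{H}^2$ is continuous and equivariant, so local compactness and the descent of the measure are fine), and take the total mass. Bilinearity and the verification on weighted simple closed geodesics are indeed routine, and the extension to $\mathcal{ML}\times\mathcal{ML}$ by density is correct once continuity is known.

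The gap is in the continuity step, which is the only substantive content of the theorem, and your sketch defers it rather than proves it. Vague convergence of $\mu_n\times\nu_n$ to $\mu\times\nu$ on the locally compact space $DG/\pi_1(M)$ already gives $i(\mu,\nu)\leq\liminf_n i(\mu_n,\nu_n)$; the hard direction is $\limsup_n i(\mu_n,\nu_n)\leq i(\mu,\nu)$, i.e.\ that no mass of $\mu_n\times\nu_n$ escapes to the ends of $DG/\pi_1(M)$. Those ends consist of pairs whose crossing angle tends to $0$ (among pairs crossing in a fixed compact part of the surface, the only degenerations are to the diagonal), and your proposed source of control --- ``a uniform tightness estimate coming from the cocompactness of $\pi_1(M)$'' --- does not supply the needed bound. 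First, $\pi_1(M)$ is not cocompact here, since $M$ has punctures (this is why \cite{DLR} has to revisit the construction). Second, and more seriously, even for a closed surface cocompactness says nothing about why $\mu_n\times\nu_n$ cannot concentrate on transverse pairs with tiny crossing angle; that is precisely the nontrivial assertion (for $\mu_n=\nu_n$ a normalized long closed geodesic converging to a measured lamination, it is the statement that the normalized self-intersection number tends to $0$). Bonahon's proof of this step uses a genuinely geometric input: two geodesics crossing at angle $\theta$ stay $\epsilon$-close along segments of length roughly $\log(1/\theta)$, so the $\pi_1(M)$-orbit of such a pair meets a fixed compact box $K\times K$ (geodesics passing through a fixed small ball) at least $N(\theta)\sim\log(1/\theta)$ times; since the quotient measure counts each orbit once, the mass of the small-angle locus is at most $\mu_n(K)\,\nu_n(K)/N(\theta)$, uniformly small in $n$. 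Without this (or an equivalent) estimate the argument does not close. A smaller point: demanding $\overline{U_k}\cap\overline{V_k}=\emptyset$ does not ensure $\overline{U_k}\times\overline{V_k}\subset DG$; what you need are \emph{linked} boxes, i.e.\ boxes of geodesics whose endpoint intervals on $S^1$ alternate, so that every pair in the closed box crosses.
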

\begin{theorem}[Duchin, Leininger, Rafi \cite{DLR}]\label{DLR}
A sequence of currents $\mu_k \in C(M)$ converges to $\mu \in C(M)$ if and only if
$$ \lim_{k\to \infty}i(\mu_k,\alpha)=i(\mu,\alpha), \forall \  \alpha \in \mathcal{C}.$$
\end{theorem}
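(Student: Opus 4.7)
The plan is to treat the two implications separately. The ``only if'' direction follows at once from the continuity of the intersection pairing, while the ``if'' direction requires a tightness argument followed by a separation argument. For the forward direction, each element $\alpha \in \mathcal{C}$ gives rise to a functional $i(\cdot,\alpha)$ on $C(M)$ that is continuous in its first argument (for closed curves this is Bonahon's bilinear extension stated above; for proper paths it is an analogous extension built into the Duchin--Leininger--Rafi framework). Weak$^*$ convergence $\mu_k \to \mu$ thus yields $i(\mu_k,\alpha) \to i(\mu,\alpha)$ for every $\alpha \in \mathcal{C}$.

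For the reverse direction, I would first establish that $\{\mu_k\}$ is relatively compact in $C(M)$. Since a $\pi_1(M)$-invariant Radon measure on $G(\mathbb{H}^2)$ is determined by its values on a compact fundamental domain of the quotient, relative compactness reduces to uniform boundedness on a compact exhaustion of $G(\mathbb{H}^2)/\pi_1(M)$. To get this boundedness, I would use ``box sets'' $B_\alpha \subset G(\mathbb{H}^2)$ consisting of geodesics transverse to a chosen lift $\tilde\alpha$, for which $\mu(B_\alpha) \leq i(\mu,\alpha)$ up to a constant depending on $\alpha$. Choosing a finite family $\{\alpha_j\} \subset \mathcal{C}$ whose boxes cover any prescribed compact region together with neighborhoods of all cusps, the assumption that $i(\mu_k,\alpha_j)$ converges provides the needed uniform upper bounds and allows passage to a weak$^*$-convergent subsequence.

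Second, I would verify that any subsequential weak$^*$ limit $\mu_\infty$ equals $\mu$. Continuity of $i$ gives $i(\mu_\infty,\alpha) = i(\mu,\alpha)$ for every $\alpha \in \mathcal{C}$, so it suffices to prove that a geodesic current on $M$ is determined by its intersection numbers with the elements of $\mathcal{C}$. This would follow from the statement that the family $\{B_\alpha : \alpha \in \mathcal{C}\}$ generates the Borel $\sigma$-algebra on $G(\mathbb{H}^2)/\pi_1(M)$ and is closed under finite intersections, so that a standard monotone class or Dynkin argument forces two Radon measures agreeing on this family to coincide.

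The main obstacle is the interaction with the cusps. On a closed surface, Bonahon's analogous theorem handles both tightness and separation using only simple closed curves. On a punctured surface, however, a geodesic current can carry mass on geodesics that spiral arbitrarily deeply into a cusp, and this mass is invisible to test curves that stay compactly inside $M$. The role of the essential proper paths in $\mathcal{C}$ is precisely to detect such mass: their lifts connect a point of $\partial \mathbb{H}^2$ to a parabolic fixed point, so the corresponding box sets exhaust the cuspidal regions of $G(\mathbb{H}^2)/\pi_1(M)$. Verifying that this enlarged family simultaneously yields tightness near the cusps and separates distinct currents is the technical heart of the argument, and is where one expects the real work to lie.
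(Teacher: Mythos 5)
First, a point of reference: the paper does not prove this statement at all --- it is imported verbatim from Duchin--Leininger--Rafi \cite{DLR} as a known result, so there is no internal proof to compare yours against. Judged on its own terms, your outline (continuity for the ``only if'' direction; tightness followed by separation for the ``if'' direction, with proper arcs doing the work at the cusps) is the correct skeleton and matches the Bonahon/DLR strategy. But two of your steps would fail as literally written.

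The separation step is the more serious problem. The family $\{B_\alpha : \alpha\in\mathcal{C}\}$ is \emph{not} closed under finite intersections: the set of geodesics crossing one curve intersected with the set crossing another is not the crossing set of any single element of $\mathcal{C}$, so the Dynkin/monotone-class argument does not apply. Moreover $i(\mu,\alpha)$ is a count with multiplicity, not the measure $\mu(B_\alpha)$ of a single set, so even granting a $\pi$-system one would not know the measures of its members. What Bonahon actually does (and DLR adapt to the punctured case) is recover the $\mu$-measure of a neighborhood basis of rectangles $(a,b)\times(c,d)\subset\partial\mathbb{H}^2\times\partial\mathbb{H}^2$ as limits of combinations of intersection numbers with geodesics and arcs whose endpoints approximate $a,b,c,d$; injectivity of $\mu\mapsto(i(\mu,\alpha))_{\alpha\in\mathcal{C}}$ comes from that explicit computation, not from an abstract uniqueness-of-measures lemma. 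Separately, your ``only if'' direction is not free either: for a proper arc $\alpha$ the relevant set of geodesics is not relatively compact in $G(\mathbb{H}^2)/\pi_1(M)$, so weak$^*$ convergence alone does not give $i(\mu_k,\alpha)\to i(\mu,\alpha)$ without a uniform no-escape-of-mass estimate at the parabolic fixed points --- the same cusp issue you correctly flag in the reverse direction. You are honest that the cusp analysis is ``where the real work lies,'' but as it stands both the tightness bound near the cusps and the separation argument are asserted rather than proved, and the $\pi$-system route you propose for the latter is not repairable without replacing it by the approximation argument above.
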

\begin{theorem}[Duchin, Leininger, Rafi \cite{DLR}]\label{DLR}
For any $q\in$ $\mathrm{Flat}(S)$, there exists a current $L_q$ such that  $i(L_q,\alpha)=l_q(\alpha)$ for all $\alpha \in \mathcal{C}$.
\end{theorem}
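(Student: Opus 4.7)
The plan is to build $L_q$ by averaging, over direction $\theta$, the natural measured foliations on $M$ whose leaves are the straight $q$-geodesics in direction $\theta$. For each $\theta \in [0,\pi)$ let $F_\theta$ denote this measured foliation: in any local flat coordinate $z=x+iy$ adapted to $q$, the leaves are the lines of slope $\tan\theta$ and the transverse measure is the flat length in the perpendicular direction, i.e.\ $|\sin\theta\,dx-\cos\theta\,dy|$. At the finitely many saddle-connection directions, $F_\theta$ is obtained by cutting along the finite-length saddle connections and foliating the complementary maximal cylinders/strips. Via the Thurston--Bonahon embedding $\mathcal{MF}\hookrightarrow C(M)$, each $F_\theta$ is promoted to a geodesic current, which we denote again by $F_\theta$.

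I would then define
\[
L_q \;=\; \tfrac{1}{2}\int_0^\pi F_\theta\,d\theta,
\]
interpreting this as a Radon measure on $G(\mathbb{H}^2)$: for a Borel set $E\subset G(\mathbb{H}^2)$, set $L_q(E)=\tfrac{1}{2}\int_0^\pi F_\theta(E)\,d\theta$. The $\pi_1(M)$-equivariance of $L_q$ is inherited from that of each $F_\theta$. To check that $L_q$ really is a geodesic current it suffices, by Theorem \ref{DLR}, to check $i(L_q,\alpha)$ is finite for each $\alpha \in \mathcal{C}$, which will follow from the intersection computation below.

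For the intersection formula, fix $\alpha\in\mathcal{C}$ and let $\alpha^*$ be a $q$-geodesic representative, parametrized by $q$-arclength $s$, with tangent direction $\phi(s)$. The pointwise angular identity
\[
i(F_\theta,\alpha)=\int_{\alpha^*}\bigl|\sin(\phi(s)-\theta)\bigr|\,ds
\]
follows directly from the definition of $F_\theta$ (the transverse measure along $\alpha^*$ is the perpendicular component of the flat velocity vector). Applying Fubini and the elementary identity $\int_0^\pi|\sin(\phi-\theta)|\,d\theta=2$ yields
\[
i(L_q,\alpha)\;=\;\tfrac12\int_0^\pi\!\!\int_{\alpha^*}|\sin(\phi(s)-\theta)|\,ds\,d\theta
\;=\;\int_{\alpha^*}ds\;=\;\ell_q(\alpha),
\]
which is the desired formula.

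The main obstacle will be two technical points. First, one must verify that $\theta\mapsto F_\theta$ is continuous (hence Borel-measurable) into $C(M)$ with the weak$^\ast$ topology, paying attention to the measure-zero set of saddle-connection directions where the combinatorics of the foliation jumps; continuity of intersection numbers $\theta\mapsto i(F_\theta,\gamma)$ for each $\gamma\in\mathcal{C}$ together with a uniform upper bound of the form $i(F_\theta,\gamma)\le \ell_q(\gamma)$ suffices, and both are checked by working directly with representatives. Second, the case of an essential proper path $\alpha$ to a puncture requires defining $i(F_\theta,\alpha)$ via proper representatives that escape into cusp neighborhoods, and exchanging the integral over $\theta$ with a limit of truncations; here one uses that the $q$-metric is complete near punctures and that both sides of the identity are continuous in the exhaustion, so the closed-curve argument extends without change.
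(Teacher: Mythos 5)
The paper does not actually prove this statement: it is quoted from Duchin--Leininger--Rafi \cite{DLR}, so there is no in-paper proof to compare against. Your construction is, however, precisely the one used in \cite{DLR}: the Liouville current of the flat metric, obtained by averaging the directional measured foliations $F_\theta$ over $\theta\in[0,\pi)$ and using $\int_0^\pi|\sin(\phi-\theta)|\,d\theta=2$, and the argument is essentially correct. The one step you pass over too quickly is the identity $i(F_\theta,\alpha)=\int_{\alpha^*}|\sin(\phi(s)-\theta)|\,ds$: the inequality $\le$ is immediate since $\alpha^*$ is a representative, but equality requires knowing that the flat geodesic $\alpha^*$ simultaneously realizes the intersection number with \emph{every} $F_\theta$; since the infimum defining $i(F_\theta,\alpha)$ could a priori be attained by a different representative for each $\theta$, Fubini alone only yields $i(L_q,\alpha)\le\ell_q(\alpha)$. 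The missing input is that a flat geodesic (a concatenation of saddle connections making angle at least $\pi$ on both sides at singularities) is quasi-transverse to each directional foliation, and quasi-transverse curves realize the intersection number; this is standard (it is how \cite{DLR} argue) but it is a genuine geometric fact, not something that ``follows directly from the definition of $F_\theta$.'' Your treatment of measurability in $\theta$ and of proper paths into the cusps is the right outline for those technicalities.
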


\subsection{Convergence of horocycle flows }

 Let $\pi: \mathcal{Q}\mathcal{T}(M) \to \mathcal{T}(M)$ be the natural projection. Let $h^t$ be a Teichm\"uller horocycle flow. The following theorem is our main result in this section.
\begin{theorem}\label{compatibility of horocyle} Let  $q\in \mathcal{Q}^1\mathcal{T}(M)$ and
\begin{eqnarray*}
 h:\R& \to& \mathcal{Q}\mathcal{T}(M)\\
    t&\mapsto & h^t(q)
\end{eqnarray*}
be the Teichm\"uller horocycle flow. Then any accumulation point of the horocycle path $\{\pi(h^t(q)\}_{t\in\R}$ on the Thurston boundary $\PMF$ is contained in the set $\{[\mu]\in\PMF: i(\mu,V_q)=0\}.$
\end{theorem}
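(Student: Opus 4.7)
The plan is to use the embedding of flat metrics into the space of geodesic currents (Theorem~\ref{DLR}) to track the asymptotic shape of the horocycle path. For each $t\in\R$, set $q_t=h^t(q)$ and $X_t=\pi(q_t)$, and let $L_t\in C(M)$ be the geodesic current associated with the flat metric $|q_t|^{1/2}$, so that $i(L_t,\alpha)=\ell_{q_t}(\alpha)$ for every $\alpha\in\mathcal{C}$.

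The first step is to prove the current convergence
$$\frac{1}{t}L_t \longrightarrow H_q \quad \text{in } C(M) \text{ as } t\to\infty.$$
In the natural $q$-coordinates $z=x+iy$, the flat length element of $q_t$ is $\sqrt{(dx+tdy)^2+dy^2}\,|dz|$, so estimating the flat length of any $\alpha\in\mathcal{C}$ on a piecewise $q$-geodesic representative gives $\ell_{q_t}(\alpha)=t\,i(H_q,\alpha)+O(i(V_q,\alpha))$, hence $\ell_{q_t}(\alpha)/t\to i(H_q,\alpha)$ for every $\alpha\in\mathcal{C}$. By Theorem~\ref{DLR} this pointwise convergence on $\mathcal{C}$ upgrades to convergence in $C(M)$.

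Now assume $t_n\to\infty$ and $X_{t_n}\to [\mu]$ in $\PMF$, with Thurston normalizations $c_n>0$ so that $c_n\ell_{X_{t_n}}(\alpha)\to i(\mu,\alpha)$ for every $\alpha$. To convert the flat-current information of the first step into hyperbolic-length information I would combine Maskit's inequality $\ell_X(F)^2\leq 2\pi|\chi(M)|\,\mathrm{Ext}_F(X)$, extended from $\mathcal{S}$ to $\MF$ by density of weighted simple closed curves and joint continuity, with Minsky's product inequality $i(F,G)^2\leq \mathrm{Ext}_F(X)\,\mathrm{Ext}_G(X)$. Applied with $G=V(t)$ and using the Hubbard--Masur identities $\mathrm{Ext}_{V(t)}(X_t)=\|q_t\|=1$ and $\mathrm{Ext}_{H_q}(X_t)=\|{-q_t}\|=1$, together with the area computation $i(V_q,V(t))=t$, one obtains $\mathrm{Ext}_{V_q}(X_t)\geq t^2$ and a uniform bound $\ell_{X_t}(H_q)\leq \sqrt{2\pi|\chi(M)|}$. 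Combining these with the current convergence $L_t/t\to H_q$ and the continuity of the intersection pairing on $C(M)$, I would deduce that the Thurston normalization forces $c_n\ell_{X_{t_n}}(V_q)\to 0$, which is precisely $i(\mu,V_q)=0$.

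The main obstacle is this last passage from flat-length asymptotics (controlled exactly by $L_t/t\to H_q$) to hyperbolic-length asymptotics (which govern the Thurston boundary) along a path $X_t$ that may leave every thick part of $\mathcal{T}(M)$. Maskit's inequality provides only the one-sided bound $\ell_X\lesssim \sqrt{\mathrm{Ext}}$ on $\MF$, while its reverse $\mathrm{Ext}\leq \tfrac12 \ell e^{\ell/2}$ is only a statement about simple closed curves and does not pass to measured foliations. Working at the level of currents via the DLR embedding sidesteps this: the asymptotic flat shape is encoded as the geodesic current $H_q$, and continuity of the intersection form lets us transfer sharp flat-length asymptotics to the hyperbolic-length ratios defining Thurston convergence without a pointwise comparison of $\ell_{X_t}$ and $\ell_{q_t}$.
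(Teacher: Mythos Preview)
Your Step~1 is essentially the paper's own Proposition~\ref{horocycle convergence in geodesic current}: estimate $\ell_{q_t}(\alpha)$ directly from the flat length element and upgrade pointwise convergence on $\mathcal{C}$ to convergence in $C(M)$ via Theorem~\ref{DLR}. (Under the conventions set up in \S\ref{sec:horocycle} your limit $H_q$ is in fact the correct one---the displayed identification $\int_\alpha|dx|=i(H_q,\alpha)$, $\int_\alpha|dy|=i(V_q,\alpha)$ in the paper's proof of that proposition has the two foliations interchanged---but either way the limiting current is the measured foliation of $q$ that the horocycle flow leaves invariant.)

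The genuine gap is Step~2. The inequalities you assemble do not yield $c_n\,\ell_{X_{t_n}}(V_q)\to 0$: Maskit's bound $\ell^2\le 2\pi|\chi|\,\mathrm{Ext}$ goes the wrong way to exploit $\mathrm{Ext}_{V_q}(X_t)\ge t^2$, and the only hyperbolic-length conclusion your estimates actually produce is that $\ell_{X_t}(H_q)$ stays bounded, which (since $c_n\to 0$) gives $i(\mu,H_q)=0$---consistent with your Step~1 limit, but not the statement you are aiming for. You acknowledge this obstacle in your last paragraph without resolving it. The paper closes exactly this gap by invoking a further result of Duchin--Leininger--Rafi, recorded here as Lemma~\ref{compatibility}: if $\pi(q_n)\to[\mu]$ on the Thurston boundary and $L_{q_n}\to[\eta]$ in $\mathcal{P}C(M)$, then $i(\mu,\eta)=0$. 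That compatibility lemma is precisely the bridge from flat-current asymptotics to Thurston-boundary information that your argument lacks; once you have it, Step~1 finishes the proof in one line, with no need for Maskit, Minsky, or Hubbard--Masur.
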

As an application, we get
 \begin{corollary}
If the vertical measured foliation $V_q$ of $q$ is uniquely ergodic,
then the horocycle path $\{\pi(h^t(q))\}_{t\in\R}$ converges to $[V_q]$ in $\PMF$.
 \end{corollary}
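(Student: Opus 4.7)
The plan is to deduce the corollary by combining Theorem \ref{compatibility of horocyle} with the defining property of unique ergodicity. First, I would recall that $V_q$ being uniquely ergodic means the underlying topological foliation carries a unique transverse invariant measure up to positive scaling; equivalently, any $\mu \in \MF$ with $i(\mu, V_q) = 0$ is a positive multiple of $V_q$. Hence the set $\{[\mu] \in \PMF : i(\mu, V_q) = 0\}$ collapses to the singleton $\{[V_q]\}$, and Theorem \ref{compatibility of horocyle} then forces every accumulation point of $\{\pi(h^t(q))\}_{t \in \R}$ on the Thurston boundary to equal $[V_q]$.

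To upgrade this into genuine convergence in the Thurston compactification, I would invoke compactness: every sequence $t_n \to \infty$ admits a subsequence along which $\pi(h^{t_n}(q))$ converges in $\mathcal{T}(M) \cup \PMF$. Combined with the previous paragraph, it then suffices to rule out subsequential limits lying in the interior $\mathcal{T}(M)$. For this I would argue that the horocycle orbit exits every compact subset of $\mathcal{T}(M)$ as $|t|\to\infty$, using the intertwining relation between the Teichm\"uller geodesic flow $g^s$ and the horocycle flow (schematically $g^s \circ h^t = h^{t e^{-2s}} \circ g^s$) together with Masur's theorem that the Teichm\"uller ray with uniquely ergodic vertical foliation converges to $[V_q]$ on $\PMF$.

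The main obstacle is precisely this non-accumulation in the interior: while the $\mathrm{SL}(2,\R)$-geometry of Teichm\"uller disks strongly suggests that horocycle orbits diverge in Teichm\"uller space, producing a clean quantitative statement compatible with the Thurston compactification requires some care (and is the part that is not immediate from Theorem \ref{compatibility of horocyle} alone). Once this divergence is in hand, the uniqueness of the accumulation point on $\PMF$ identified in the first paragraph yields the desired convergence of the whole path to $[V_q]$ in $\PMF$.
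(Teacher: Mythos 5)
Your first paragraph is precisely the paper's own proof: Theorem \ref{compatibility of horocyle} confines all accumulation points on the Thurston boundary to $\{[\mu]\in\PMF: i(\mu,V_q)=0\}$, and unique ergodicity collapses this set to the singleton $\{[V_q]\}$. Where you go beyond the paper is in observing that this alone does not yet give convergence in the Thurston compactification: one must also rule out accumulation points in the interior $\mathcal{T}(M)$, a point the paper passes over in silence. That observation is correct and worth making explicit. However, the route you sketch for it (the commutation relation between $g^s$ and $h^t$ together with Masur's theorem on uniquely ergodic Teichm\"uller rays) is heavier than necessary and is left unfinished, whereas it can be closed directly from material already in the paper. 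Pick any $\alpha\in\mathcal{S}$ with $i(V_q,\alpha)>0$ (such $\alpha$ exists since $V_q$ is a nonzero measured foliation). The estimates in the proof of Proposition \ref{horocycle convergence in geodesic current} give
$$L_{h^t(q)}(\alpha)\ \geq\ i\bigl(V_{h^t(q)},\alpha\bigr)\ \geq\ |t|\, i(V_q,\alpha)-i(H_q,\alpha)\ \longrightarrow\ \infty ,$$
and since $\|h^t(q)\|=1$ by Lemma \ref{h keeps Q^1}, the flat metric of $h^t(q)$ is an admissible competitor in the definition of extremal length, so that
$$\mathrm{Ext}_{\alpha}\bigl(\pi(h^t(q))\bigr)\ \geq\ \frac{L_{h^t(q)}(\alpha)^2}{\|h^t(q)\|}\ \longrightarrow\ \infty .$$
Hence the horocycle path leaves every compact subset of $\mathcal{T}(M)$, every accumulation point lies on $\PMF$, and your first paragraph then yields convergence to $[V_q]$.
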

 \begin{proof}
 Since  $V_q$ is uniquely ergodic, the set $\{[\mu]\in\PMF: i(\mu,V_q)=0\}$ contains only one element $[V_q]$, therefore the horocycle path  $\{\pi(h^t(q)\}_{t\in\R}$ converges to $[V_q]$ in $\PMF$.
 \end{proof}
 \begin{corollary}
If the vertical measured foliation $V_q$ of $q$ is uniquely ergodic,
then the horocycle path $\{\pi(h^t(q))\}_{t\in\R}$ converges to $[V_q]$ on the Gardiner-Masur boundary.
\end{corollary}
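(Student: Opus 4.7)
The plan is to mirror the argument used in the uniquely ergodic case of the proof of Theorem \ref{sim & uniq}, with the preceding corollary on Thurston-boundary convergence playing the role that Bonahon's convergence statement for earthquake paths played there. First I would note that by the preceding corollary $\pi(h^t(q))\to [V_q]$ in $\PMF$, so the horocycle path eventually leaves every compact subset of $\mathcal{T}(M)$; by compactness of the Gardiner-Masur compactification the set of accumulation points of $\{\pi(h^t(q))\}_{t\in\R}$ in $\partial_{GM}\mathcal{T}(M)$ is nonempty, so it will be enough to show that every such accumulation point equals $[V_q]$.

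Next, I would fix an accumulation point $p\in\partial_{GM}\mathcal{T}(M)$ together with a sequence $t_n$ for which $X_n:=\pi(h^{t_n}(q))\to p$ in the Gardiner-Masur topology. By the preceding corollary the same subsequence also satisfies $X_n\to [V_q]$ under the Thurston embedding. At this point I would apply Miyachi's Corollary 1 of \S 6 of \cite{Miy2}, which is the very tool invoked at the end of the proof of Theorem \ref{sim & uniq}: it asserts that once a sequence converges simultaneously to $p$ on the Gardiner-Masur boundary and to $[V_q]$ on the Thurston boundary, unique ergodicity of $V_q$ forces the zero locus of Miyachi's function $\varepsilon_p$ on $\MF$ to equal $\{tV_q : t\geq 0\}$. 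Combined with the equivalence (recalled in the proof of Theorem \ref{sim & uniq}) that unique ergodicity of $V_q$ is the same as the identity $\{F\in\MF : i(F,V_q)=0\} = \{tV_q : t\geq 0\}$, this yields $\varepsilon_p(F)=0$ for every $F\in\MF$ satisfying $i(F,V_q)=0$.

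Finally I would apply Theorem \ref{Miy} with $G=V_q$: it produces a constant $c>0$ such that $\varepsilon_p(H)=c\,i(H,V_q)$ for every $H\in\MF$, which identifies $p$ with the Gardiner-Masur boundary point $[V_q]$, closing the loop. I do not expect any genuine obstacle: the argument is essentially a routine transfer of the uniquely ergodic branch of Theorem \ref{sim & uniq}, once the preceding corollary's Thurston-boundary convergence is at hand; no new estimates on extremal lengths or flat lengths along the horocycle flow are needed beyond what is already encoded in Theorem \ref{compatibility of horocyle} and its Thurston-boundary corollary.
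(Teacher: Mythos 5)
Your proposal is correct and is exactly the argument the paper intends: the paper's proof of this corollary consists of the single line ``The proof is similar to that of Theorem \ref{sim & uniq}'', and what you have written out is precisely the uniquely ergodic branch of that proof, with the preceding corollary on convergence in $\PMF$ substituting for Bonahon's lemma, followed by Miyachi's Corollary 1 of \S 6 of \cite{Miy2} and Theorem \ref{Miy}. No discrepancies.
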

\begin{proof}
The proof is similar to that of Theorem \ref{sim & uniq}.
\end{proof}
Before proving Theorem \ref{compatibility of horocyle}, we need to do some preparations.
\begin{lemma}\label{h keeps Q^1}
$h^t$ keeps $\mathcal{Q}^1\mathcal{T}(M)$ invariant.
\end{lemma}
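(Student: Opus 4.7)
The plan is to verify that the area form $|q|\,dx\wedge dy$ is preserved under the horocycle deformation, so that integrating over $M$ gives $\|h^t(q)\|=\|q\|$. This reduces to a local computation away from the finite set of zeros of $q$ (which is of measure zero and therefore contributes nothing to $\|\cdot\|$).

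More precisely, pick a nonsingular point of $q$ and work in a natural coordinate $z = x + iy$ in which $q = dz^2$. In this chart the flat area form of $q$ is $|dx\wedge dy|$, and the vertical and horizontal transverse measures of $q$ are $|dx|$ and $|dy|$ respectively. By the definition of the horocycle flow recalled above, the quadratic differential $h^t(q)$ has vertical transverse measure $|dx+t\,dy|$ and horizontal transverse measure $|dy|$. Since these two foliations remain transverse, the flat area form of $h^t(q)$ at this point is the product of the transverse measures, namely
\[
\bigl|(dx+t\,dy)\wedge dy\bigr| = |dx\wedge dy|,
\]
because $dy\wedge dy=0$. Equivalently, the change of natural coordinate $(x,y)\mapsto(x+ty,\,y)$ is an $\R$-linear map of Jacobian $1$, so it preserves Lebesgue measure.

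Because this identification of area forms is pointwise (away from the zero set of $q$, which is the same for $q$ and $h^t(q)$), integrating over $M$ yields $\|h^t(q)\|=\|q\|=1$, which is exactly the statement that $h^t$ preserves $\mathcal{Q}^1\mathcal{T}(M)$. Since the computation is entirely local and does not involve any subtle convergence issues, there is no genuine obstacle; the only point that deserves explicit mention is that the comparison of the two flat area forms uses only the transversality of the horizontal and vertical foliations of $h^t(q)$, which is built into the definition of the horocycle flow.
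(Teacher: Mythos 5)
The paper gives no proof of this lemma, dismissing it as well known, so there is nothing to compare against; your argument is the standard one and is correct. The key point --- that the shear $(x,y)\mapsto(x+ty,y)$ is unimodular, hence preserves the flat area form $|dx\wedge dy|$ away from the measure-zero singular set --- is exactly what makes $\|h^t(q)\|=\|q\|$.
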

The above lemma is well known and we neglect the proof.
%\begin{proof}
%From the definition of the  horocycle flow, for a fixed $t$,  $h^t$ is a map which is differentiable except at the zeroes of $q$. The Jacobian $J_{h^t}(z)$ of $h^t$ is $1$ at its differentiable points. Hence (in natural coordinates $z=x+iy$)
%\begin{eqnarray*}
%||h^t(q)||&=&\iint_{M}J_{h^t}(z)dxdy\\
%          &=&\iint_{M}dxdy\\
%          &=&1.\\
%\end{eqnarray*}
%\end{proof}

\begin{proposition}\label{horocycle convergence in geodesic current}
Assume $q\in \mathcal{Q}^1\mathcal{T}(M)$, and let
\begin{eqnarray*}
 h:\R& \to& \mathcal{Q}^1\mathcal{T}(M)\\
    t&\mapsto & h^t(q).
\end{eqnarray*}
Then, considering $h^t(q)$ as an element of $C(M)$, we have
\[\frac{L_{h^t(q)}}{|t|} \to V_q\]
as $|t|\to\infty$, where $V_q$ is the vertical measured foliation of $q$.
\end{proposition}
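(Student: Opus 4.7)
The plan is to translate the desired convergence in $C(M)$ into a flat-length asymptotic and prove the latter by a sandwich estimate in the natural coordinates of $q$. By the weak-$*$ convergence criterion of Duchin-Leininger-Rafi (Theorem~\ref{DLR}), it suffices to verify
\[
\lim_{|t|\to\infty}\frac{i(L_{h^t(q)},\alpha)}{|t|}=i(V_q,\alpha)
\qquad\text{for every }\alpha\in\mathcal{C},
\]
and the flat-length identification $i(L_q,\alpha)=\ell_q(\alpha)$ reduces this to showing
\[
\lim_{|t|\to\infty}\frac{\ell_{h^t(q)}(\alpha)}{|t|}=i(V_q,\alpha).
\]

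Next I would set up the estimate in the natural coordinates $z=x+iy$ for $q$, away from its zeros. There $h^t(q)=du^{2}$ in the coordinate $u+iv=(x+ty)+iy$, so the $h^t(q)$-flat line element is $\sqrt{(dx+t\,dy)^{2}+dy^{2}}$. The new vertical foliation $V(t)$ has transverse measure $|dx+t\,dy|$ and the horizontal is unchanged from $H_q$, with transverse measure $|dy|$. The key analytic step is a two-sided sandwich for the flat length: from $\sqrt{a^{2}+b^{2}}\ge|a|$ one gets the lower bound $\ell_{h^t(q)}(\alpha)\ge i(V(t),\alpha)$, while integrating $\sqrt{a^{2}+b^{2}}\le|a|+|b|$ along the $q$-geodesic representative $\alpha^{*}$ of $\alpha$ (for which $\int_{\alpha^{*}}|dx|=i(V_q,\alpha)$ and $\int_{\alpha^{*}}|dy|=i(H_q,\alpha)$) gives the upper bound
\[
\ell_{h^t(q)}(\alpha)\;\le\;i(V_q,\alpha)+(|t|+1)\,i(H_q,\alpha).
\]

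Finally, I would divide the sandwich by $|t|$, pass the normalized transverse measure $|dx+t\,dy|/|t|$ to its pointwise limit so that $V(t)/|t|$ converges in $\mathcal{MF}$, and invoke Bonahon's continuity of intersection (Theorem~\ref{DLR}) to compute the common limit of the two bounds on the intersection side. The main obstacle I expect is to actually pinch the ratio $\ell_{h^t(q)}(\alpha)/|t|$ to the claimed value $i(V_q,\alpha)$: the upper bound is linear in $|t|$ with coefficient $i(H_q,\alpha)$, so matching it with the lower bound $i(V(t),\alpha)/|t|$ and identifying the joint limit with $i(V_q,\alpha)$ demands careful bookkeeping of the rescaling of $V(t)$. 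A secondary difficulty is the extension from essential closed curves to proper arcs $\alpha\in\mathcal{C}$ ending at cusps, where the existence of a $q$-geodesic representative realizing both intersection numbers $i(V_q,\alpha)$ and $i(H_q,\alpha)$ simultaneously requires extra geometric input near the punctures.
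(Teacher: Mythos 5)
Your strategy coincides with the paper's: reduce, via the Duchin--Leininger--Rafi weak-$*$ criterion and the identity $i(L_{q'},\alpha)=L_{q'}(\alpha)$, to the flat-length asymptotic for each $\alpha\in\mathcal{C}$, then sandwich $L_{h^t(q)}(\alpha)$ between $\max\{i(V(t),\alpha),\,i(H_q,\alpha)\}$ and $i(V(t),\alpha)+i(H_q,\alpha)$ and estimate the transverse measure $|dx+t\,dy|$ by the triangle inequality. The one step you leave open --- ``pinching'' the ratio --- is exactly the step the paper supplies, and it is just the reverse triangle inequality: on a representative $\alpha^{*}$ realizing the intersection numbers with both coordinate foliations simultaneously one has $|t|\int_{\alpha^{*}}|dy|-\int_{\alpha^{*}}|dx|\le\int_{\alpha^{*}}|dx+t\,dy|\le\int_{\alpha^{*}}|dx|+|t|\int_{\alpha^{*}}|dy|$, so $i(V(t),\alpha)/|t|$, and hence $L_{h^t(q)}(\alpha)/|t|$, is squeezed between $\int_{\alpha^{*}}|dy|\pm O(1/|t|)$. (Since the intersection numbers are infima, one should check that a common representative works for both bounds; the $h^t(q)$-geodesic representative is also a $q$-geodesic representative because the two flat structures differ by an affine map, so this is harmless. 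The paper elides this point entirely.) Your worry about proper arcs is minor: the length-versus-intersection inequalities hold verbatim for arcs in $\mathcal{C}$, which is all the criterion requires.

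The more substantive issue is the one your bookkeeping has actually uncovered. Carried out consistently with the conventions of the preliminaries ($V_q$ has transverse measure $|dx|$, $H_q$ has transverse measure $|dy|$, so $i(V_q,\alpha)=\int_{\alpha^{*}}|dx|$ and $i(H_q,\alpha)=\int_{\alpha^{*}}|dy|$), the common limit of your two bounds is $\int_{\alpha^{*}}|dy|=i(H_q,\alpha)$; that is, the sandwich proves $L_{h^t(q)}/|t|\to H_q$, not $V_q$. The paper's proof reaches $i(V_q,\alpha)$ only because its displayed computation sets $\int_{\alpha}|dx|=i(H_q,\alpha)$ and $\int_{\alpha}|dy|=i(V_q,\alpha)$, the opposite of the identification made when the foliations are defined. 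So either the conclusion of the proposition (and the statements downstream of it) should refer to $H_q$, or the roles of $x$ and $y$ in the definition of $h^t$ are meant to be interchanged. In short: your approach is the right one and is essentially the paper's; the ``main obstacle'' you flag is closed by one more line of triangle inequality; and your inability to make the limit come out as $i(V_q,\alpha)$ is not a defect of your argument but a detection of a convention inconsistency in the source.
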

\begin{proof}
For any closed curve $\alpha \in \mathcal{C}$, the flat length $l_{h^t(q)}(\alpha)$ of $\alpha$ is less than the sum of its horizontal length $i(V_{h^t(q)},\alpha)$ and its vertical length $i(H_{h^t(q)},\alpha)$ and is larger than the maximum of its horizontal length $i(V_{h^t(q)},\alpha)$ and its vertical length $i(H_{h^t(q)},\alpha)$.

By definition, we have the following:
\begin{eqnarray*}
i(H_{h^t(q)},\alpha)&=&i(H_{q},\alpha),\\
i(V_{h^t(q)},\alpha)&=&\int_{\alpha}|dx+tdy|\\
                    &\leq&\int_{\alpha}|dx|+|t||dy|\\
                    &=&i(H_{q},\alpha)+|t|i(V_{q},\alpha),\\
i(V_{h^t(q)},\alpha)&\geq & -i(H_{q},\alpha)+|t|i(V_{q},\alpha).
\end{eqnarray*}
Therefore
\[\frac{-i(H_{q},\alpha)}{|t|}+i(V_{q},\alpha)\leq \frac{L_{q_t}(\alpha_i)}{|t|}\leq \frac{2i(H_{q},\alpha)}{|t|}+i(V_{q},\alpha).\ \]
This implies
\[ \frac{L_{q_t}(\alpha_i)}{|t|} \to i(V_{q},\alpha)\]
as $ |t|\to\infty.$
Now the proposition follows from  Lemma \ref{DLR}.
\end{proof}

The last ingredient we need to prove Theorem \ref{compatibility of horocyle} is the following lemma.
\begin{lemma}[\cite{DLR}]\label{compatibility}
Let $q_n$ be a sequence of flat structures on $M$ and $\sigma_n=\pi(q_n)$. Assume that $\sigma_n\to\mu$ on the Thurston compactification and $q_n \to \eta$ in $\mathcal{P}C(M)$. Then $i(\mu,\eta)=0$.
\end{lemma}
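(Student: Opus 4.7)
The plan is to realize $\sigma_n$ and $q_n$ simultaneously as elements of $C(M)$ and then extract $i(\mu,\eta)=0$ from Bonahon's joint continuity of the intersection pairing on $C(M)$.

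First, I would translate the hypotheses into statements in $C(M)$. Bonahon's Liouville embedding $\sigma\mapsto\mathcal{L}_\sigma$ of $\mathcal{T}(M)$ into $C(M)$ satisfies $i(\mathcal{L}_\sigma,\alpha)=\ell_\sigma(\alpha)$ for $\alpha\in\mathcal{C}$ and $i(\mathcal{L}_\sigma,\mathcal{L}_\sigma)=c_H\cdot\mathrm{Area}(\sigma)$ for a positive universal constant $c_H$; moreover, Bonahon identifies the Thurston compactification with the projective closure of the image. So $\sigma_n\to\mu$ yields scalars $c_n>0$ with $c_n\mathcal{L}_{\sigma_n}\to\mu$ in $C(M)$. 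Since $\mu\in\PMF$ forces $i(\mu,\mu)=0$ while $i(\mathcal{L}_{\sigma_n},\mathcal{L}_{\sigma_n})$ stays bounded below by a positive constant, joint continuity applied to $c_n^2\,i(\mathcal{L}_{\sigma_n},\mathcal{L}_{\sigma_n})\to i(\mu,\mu)=0$ forces $c_n\to 0$. The hypothesis also supplies $d_n>0$ with $d_n L_{q_n}\to\eta$; after normalizing $\|q_n\|=1$ (which is harmless in $\mathcal{P}C(M)$), the Duchin--Leininger--Rafi identity $i(L_q,L_q)=c_F\|q\|$ gives $d_n^2 c_F\to i(\eta,\eta)<\infty$, so $d_n$ is uniformly bounded. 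Combining, $c_n d_n\to 0$.

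Next, I would establish the key uniform bound
$$i(\mathcal{L}_{\sigma_n},L_{q_n})\;\le\;K,$$
with $K$ depending only on $\chi(M)$. This is the technical heart of the proof and is supplied by the Duchin--Leininger--Rafi framework applied to the common conformal structure $\sigma_n=\pi(q_n)$: both currents are realized through integrals against the geodesic flow for $\sigma_n$, and a Cauchy--Schwarz-type estimate in Bonahon's integral representation of $i$ controls the mixed pairing by $\sqrt{i(\mathcal{L}_{\sigma_n},\mathcal{L}_{\sigma_n})\cdot i(L_{q_n},L_{q_n})}$, which equals a topological constant times $\sqrt{\mathrm{Area}(\sigma_n)\cdot\|q_n\|}=\sqrt{2\pi|\chi(M)|}$ in our setup. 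Equivalently, the bound can be read off from the Maskit inequality $\ell_{\sigma_n}(\alpha)\le\sqrt{2\pi|\chi(M)|\,\mathrm{Ext}_\alpha(\sigma_n)}$ and the extremal-length bound $\ell_{q_n}(\alpha)\le\sqrt{\mathrm{Ext}_\alpha(\sigma_n)}$ (valid since $\|q_n\|=1$), combined with the density of weighted closed geodesics in $C(M)$ and continuity of $i$.

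Finally, Bonahon's joint continuity of $i$ yields
$$i(\mu,\eta)\;=\;\lim_n c_n d_n\,i(\mathcal{L}_{\sigma_n},L_{q_n})\;\le\;K\cdot\lim_n c_n d_n\;=\;0,$$
so $i(\mu,\eta)=0$. The main obstacle is the uniform bound on the mixed intersection $i(\mathcal{L}_{\sigma_n},L_{q_n})$, which is where the DLR machinery (the integral representation of $L_q$ and the interplay with the Liouville current on the same conformal class) is genuinely needed; once this bound is in hand, the rest of the argument is bookkeeping of the scaling constants $c_n,d_n$ together with Bonahon's joint continuity of $i$.
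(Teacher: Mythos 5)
First, a caveat: the paper does not prove this lemma at all --- it is imported verbatim from Duchin--Leininger--Rafi with a citation --- so there is no internal argument to compare against and your proof has to stand on its own. Your architecture is the natural one and, as far as I can tell, is the one in the source: realize $\sigma_n$ via the Liouville current $\mathcal{L}_{\sigma_n}$ and $q_n$ via $L_{q_n}$, deduce $c_n\to 0$ from $i(\mu,\mu)=0$ together with the constancy of $i(\mathcal{L}_{\sigma_n},\mathcal{L}_{\sigma_n})$, deduce boundedness of $d_n$ from $i(L_{q_n},L_{q_n})=c_F\|q_n\|$, and finish by bilinearity and joint continuity once the mixed pairing $i(\mathcal{L}_{\sigma_n},L_{q_n})$ is uniformly bounded. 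That bookkeeping is all correct (modulo the implicit, and intended, assumption that $\mu$ is a boundary point, so that $i(\mu,\mu)=0$).

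The gap lies in the two justifications you offer for the uniform bound, which you rightly identify as the entire content of the lemma. (i) A Cauchy--Schwarz inequality $i(a,b)\le\sqrt{i(a,a)\,i(b,b)}$ is \emph{false} for the intersection form on $C(M)$: two transverse measured laminations $\lambda,\nu$ satisfy $i(\lambda,\lambda)=i(\nu,\nu)=0$ but $i(\lambda,\nu)>0$. What saves you here is that $\sigma_n$ and $q_n$ are conformal metrics on the same Riemann surface, for which one can establish an integral formula of the shape $i(\mathcal{L}_{\sigma_n},L_{q_n})=c\int_M \rho_{\sigma_n}\rho_{q_n}$ and then apply Cauchy--Schwarz to the \emph{densities} to get $c\sqrt{\mathrm{Area}(\sigma_n)\cdot\|q_n\|}$; but that formula is precisely the nontrivial input (the two metrics have different geodesics, so the Crofton/Fubini computation is not formal), and you assert it rather than prove it. (ii) The ``equivalent'' Maskit route does not work: both inequalities you quote bound lengths from above by $\sqrt{\mathrm{Ext}_\alpha(\sigma_n)}$, and to dominate $L_{q_n}$ by a multiple of $\mathcal{L}_{\sigma_n}$ as currents (which is what density of weighted closed geodesics plus continuity of $i$ would exploit) you would need $\ell_{q_n}(\alpha)\le C\,\ell_{\sigma_n}(\alpha)$ uniformly in $n$ and $\alpha$. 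This fails as soon as $X_n$ develops a short curve $\beta$ with $\ell_{\sigma_n}(\beta)=\epsilon$: a curve $\alpha$ crossing $\beta$ can have flat length of order $\epsilon^{-1/2}$ (take $q_n$ the unit-area Strebel differential of $\beta$) while its hyperbolic length is only of order $\log(1/\epsilon)$, and short curves are exactly what one expects when $\sigma_n$ degenerates toward $\PMF$. So the proposal is correct in outline, but the one step carrying the mathematical weight is not actually established by either of the arguments you sketch.
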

\begin{proof}[Proof of Theorem \ref{compatibility of horocyle}]
From Proposition \ref{horocycle convergence in geodesic current}, we know
 that $L_{h^t(q)} \to V_q$ in $\mathcal{P}C(M)$. Assume $[\mu]$ is an accumulation point of the horocycle path $\{H^t(q)\}_{t\in\R}$. There is a sequence of positive numbers $\{t_n\}$ such that $\pi(h^t(q))\to \mu $ on the Thurston compactification. Hence $i(\mu,V_q)=0$ by Lemma \ref{compatibility}.
\end{proof}

\section{Concluding remarks}

We have shown that any Teichm\"uller horocycle flow induced by a holomorphic quadratic differential with uniquely ergodic
vertical measured foliation converges to the Gardiner-Masur boundary.
Since the Gardiner-Masur boundary is a natural boundary for the geometry of Teichm\"uller distance \cite{Miy1,Miy2,LS},
it is natural to ask how the orbit of a general horocycle flow behave in the Gardiner-Masur
compactification.

The necessary condition in Proposition \ref{necessary} seems far-fetched. Thus we believe that it is possible to construct rational earthquake flows (that is, earthquakes directed by weighted multicurves)
that are not converge to the Gardiner-Masur boundary.

As we mentioned above,  M. Mirzakhani showed that the Teichm\"uller horocycycle flow is measurably
equivalent to the earthquake flow. Hence, it is also natural to ask how the two flows are different ``geometrically".
We point out that Minsky and Weiss \cite{MW} showed that, in the case of a once-punctured torus or a four-times punctured sphere,
the earthquake flow and horocycle flow, induced by the same measured laminations, may have  infinite Hausdorff (Teichm\"uller) distance.

\end{document}